\documentclass[a4paper]{amsart}
\usepackage{amssymb,amsthm,mathtools,enumerate}
\usepackage{multirow}
\usepackage{tikz,tikzscale}
\usepackage{caption,subcaption}

\usepackage{longtable}
\usepackage{placeins}

\newtheorem{theorem}{Theorem}
\newtheorem{lemma}{Lemma}[section]
\newtheorem{corollary}{Corollary}

\theoremstyle{definition}
\newtheorem{definition}{Definition}
\newtheorem*{remark}{Remark}

\usetikzlibrary{calc}

\definecolor{diagrambg}{gray}{1}
\definecolor{diagramfg}{gray}{0}

\pagecolor{diagrambg}
\color{diagramfg}
\colorlet{knotbg}{diagrambg}
\colorlet{knotfg}{diagramfg}

\tikzset{
  local binding/.style={
    draw=red,
    dashed,
    rounded corners,
    line width=1.5pt
  },
  local strand/.style={
    draw=diagrambg,
    double=diagramfg,
    line width=0.8pt,
    double distance=1pt,
    rounded corners=6pt
  },
  label/.style={
    text=diagramfg
  }
}

\newcommand{\Figure}[1]{Figure \ref{fig:#1}}
\newcommand{\Fig}[1]{Fig. \ref{fig:#1}}
\newcommand{\Table}[1]{Table \ref{tab:#1}}
\newcommand{\Lemma}[1]{Lemma \ref{L:#1}}
\newcommand{\Theorem}[1]{Theorem \ref{T:#1}}
\newcommand{\Definition}[1]{Definition \ref{D:#1}}
\newcommand{\Section}[1]{\S\ref{S:#1}}
\makeatletter
\newcommand{\setFloat}[1]{%
  \renewcommand{\fps@figure}{#1}%
}
\makeatother
\newcommand{\nextLabel}[1]{%
  \providecommand{\pendingLabel}{}%
  \renewcommand{\pendingLabel}{#1}%
}

\newcommand{\ttt}[1]{\ensuremath{\texttt{#1}}}
\newcommand{\ottt}[1]{\ensuremath{\overline{\texttt{#1}}}}

\title{Locally Minimal Bridge Presentations of Knots}
\subjclass[2020]{Primary 57K10}
\author{Chad Musick}
\email{chad.e.musick@gmail.com}

\begin{document}
\begin{abstract}
  For classical knots, many characteristics are difficult to determine from arbitrary diagrams
  of the knot. Bridge number is of this type. It is possible to have a
  bridge presentation of a knot in which the number of bridges is only locally minimal, as shown in
  \cite{OzawaTakaoLocalBridge}. Here, we give a method for beginning with an arbitrary classical
  knot diagram having \(n\) crossings and producing a locally minimal bridge presentation of the
  knot with size \(O(n^2)\). This method requires only polynomial time and space.
  For some classes of knots, any local minimum is also the global minimum, which was shown for
  the trivial knot in \cite{Otal1982}. Because the unknot is the only knot with bridge
  number \(1\), unknot recognition is in \(P\).
\end{abstract}
\maketitle

A problem of some interest in knot theory is to find knot diagrams with a minimal
number of bridges. Ozawa and Takao \cite{OzawaTakaoLocalBridge} proved that diagrams can
have a locally but not globally minimal number of bridges for some knots. Despite this,
some classes of knots are known to have no bridge presentations that are only locally
minimal. Otal's earlier result \cite{Otal1982} establishes that no diagram of the trivial
knot has a locally minimal bridge count higher than \(1\).

In writing \cite{MinimalBridges}, this author used the method described here to find an upper
bound for every \(11\)-crossing prime knot and a diagram realizing that bound. In combination
with lower bounds known from other knot characteristics, this completed the catalog of the bridge
numbers of all \(11\)-crossing prime knots. Here, we present the method in a way that keeps the
time and space complexity bounded polynomially relative to the number of crossings in the original
diagram, although we do not prove a non-exponential bound on the number of crossings of the
diagram realizing this locally minimal bridge position.

In \cite{TarLinks}, this author described a grammar to characterize embeddings of links in \(S^3\).
We impose some restrictions on that grammar here in order to make topological possibilities
legible directly from that characterization, without geometric reconstruction. We briefly
show a variant of the earlier encoding, specialized for the purpose of characterizing
bridges and underpass-avoidance moves in knot diagrams.

In this paper, we establish a combinatorial grammar that can be used to begin with an
arbitrary knot diagram, characterize it, and find a locally minimal
bridge presentation in polynomial time and space relative to the number of crossings in
the initial diagram.

\section{Bridge Presentations and \(3\)-Page Bridge Embeddings}
\begin{definition}[bridge presentation]\label{D:bridgepresentation}
We define a \emph{bridge presentation} in the following way, which constructs a Heegaard splitting
between two \(3\)-balls.

Identify \(S^3\) with \(\mathbb{R}^3\cup\{\infty\}\). Let
\begin{align*}
B^- &= \left\{x \in \mathbb{R}^3: \lVert x\rVert \leq \frac{3}{2} \right\}\text{;} &
B^+ &= \left\{x \in \mathbb{R}^3: \lVert x\rVert \geq \frac{3}{2} \right\} \cup \{\infty\}\text{.}
\end{align*}
Then
\[ \Sigma = B^-\cap B^+\text{;}\qquad S^3 = B^- \cup_{\Sigma} B^+ \]
is the standard genus-zero Heegaard splitting of \(S^3\), with Heegaard sphere \(\Sigma\).

This is a bridge presentation when each of \(B^-\) and \(B^+\) contains only a trivial tangle.
\end{definition}

A combinatorial version of this is a \(3\)-page embedding that keeps the underpasses on their
own page and bridges on a pair of pages. See Dynnikov \cite{DynnikovThreePage, DynnikovUniversalSemigroup}
for a general definition of a \(3\)-page embedding of a link and some algebraic results
for such embeddings. Here, we perform the one-point compactification of the binding line by
adding a point at \(\infty\), and we restrict the arc placement to keep the
embedding in bridge-presentation form.

\begin{definition}[\(3\)-page bridge embedding]
  A \emph{\(3\)-page bridge embedding} of a knot \(K\) is an embedding of \(K\) into \(3\) pages,
  which we label \(U\), \(S\), and \(N\) for \emph{underpasses}, \emph{south}, and
  \emph{north}, respectively, with the following characteristics.
  \begin{itemize}
    \item The binding line with a point added at \(\infty\) is the \emph{binding equator}.
    \item Each page is a disc.
    \item No page has any crossings between arcs.
    \item The binding equator of the pages has a cyclic order, along the edge of each page.
    \item Joining pages \(S\) and \(N\) forms a copy of \(S^2\) with the binding equator along the
      equator of that sphere.
    \item Each arc in \(U\) lies along the equator. These arcs may skip punctures but are
      pairwise disjoint.
    \item The interior of each arc in \(S \cup N\) is entirely within one of the two pages.
    \item Every puncture on the binding has exactly zero or two incident edges and at most one
      incident edge in any one page.
    \item The curve traced by following the arcs through the pages is a single loop.
  \end{itemize}
\end{definition}

We note that the \(3\)-page bridge embedding is equivalent to a bridge presentation by
putting page \(U\) on one hemisphere of a sphere with radius \(1\) and center
at the origin of \(\mathbb{R}^3\), putting pages \(S\) and \(N\) as the two hemispheres of a
sphere with radius \(2\) and center at the origin of \(\mathbb{R}^3\), aligning the
equators of the two spheres to lie on the plane \(z = 0\), and drawing
segments between punctures incident to arcs at different radii along rays from the origin.
This matches \Definition{bridgepresentation}.

\begin{lemma}
  Given a knot \(K\) in a \(3\)-page bridge embedding, \(K\) is a bridge presentation.
\end{lemma}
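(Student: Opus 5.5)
The plan is to make the geometric realization described just before the statement precise, and then check that each of the two balls \(B^-\) and \(B^+\) of \Definition{bridgepresentation} meets \(K\) in a trivial tangle.

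\textbf{Realization.} Place page \(U\) as a hemisphere of the sphere of radius \(1\). Place pages \(S\) and \(N\) as the southern and northern hemispheres of the sphere of radius \(2\). Both equators lie in the plane \(z=0\), and the cyclic order of punctures on the binding equator is the same on both. Every puncture has two incident edges and at most one in each page, so it is incident to at most one arc of \(U\). Exactly one of its two arcs lies in \(U\) precisely when the puncture is an endpoint of an underpass. At such a puncture we insert the radial segment between radius \(1\) and radius \(2\).

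\textbf{Arcs that skip punctures.} An arc of \(U\) that runs along the equator and skips punctures would meet the radial segments at the skipped punctures. To avoid this, push each \(U\)-arc slightly into the interior of the radius-\(1\) hemisphere. Because the \(U\)-arcs are pairwise disjoint, we can nest them in the order of their spans along the equator and keep them embedded. The curve obtained this way is a single loop, since following the arcs through the pages traces one loop. Hence it is a knot isotopic to the one encoded by the embedding.

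\textbf{The ball \(B^-\).} \(K\cap B^-\) consists of the pushed \(U\)-arcs together with the inner halves of the radial segments. Each component is a small arc lying in the radius-\(1\) hemisphere, extended radially out to \(\Sigma\). Radial projection to \(\Sigma\), followed by a straight-line isotopy, carries all components simultaneously into \(\Sigma\) as disjoint arcs; the nesting chosen above is what keeps these images disjoint. Therefore the tangle in \(B^-\) is trivial.

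\textbf{The ball \(B^+\).} \(K\cap B^+\) consists of the arcs in \(S\cup N\), each lying on the sphere of radius \(2\), together with the outer halves of the radial segments. No page has crossings and each arc's interior stays in one page, so these arcs are disjoint arcs on a sphere concentric with \(\Sigma\). For the same reason as before, radial projection isotopes them into \(\Sigma\) with disjoint images. Therefore the tangle in \(B^+\) is also trivial, and \(K\) is in bridge position.

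\textbf{Main obstacle.} The hardest part is the detail behind this last step. Maximal runs of consecutive arcs in \(S\cup N\), with no \(U\)-arc between them, must form single boundary-parallel arcs. Each such run is connected through punctures that carry no \(U\)-arc. If a puncture has no \(U\)-arc, both of its arcs lie in \(S\cup N\). The chain of arcs in \(S\cup N\) running from one underpass endpoint to the next is therefore a single embedded path on the radius-\(2\) sphere. Such a path is boundary-parallel by radial projection, and disjoint paths project to disjoint arcs. The care needed is only to check that the projection remains injective across the whole collection. I expect to handle this with an arbitrarily small perturbation near punctures where both incident arcs lie in \(S\cup N\).
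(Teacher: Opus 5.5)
Your proposal is correct and is essentially the paper's argument. The paper's proof only asserts that, under the radius-\(1\)/radius-\(2\) realization described before the lemma, \(\Sigma\) meets \(K\) in \(2n\) points and both sides are trivial tangles ``by construction''; your radial-curtain argument is exactly the verification behind that, and it even gives simultaneous rather than merely componentwise boundary-parallelism. The obstacle you anticipate is not real: radial projection between concentric spheres is a homeomorphism, so the pairwise disjoint embedded bridge paths (and \(U\)-arcs) automatically project to disjoint arcs in \(\Sigma\) with no perturbation, and pushing the \(U\)-arcs off the equator is harmless but unnecessary, since a \(U\)-arc along the equator can only skip non-terminal punctures, which carry no radial segment.
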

\begin{proof}
  \(\Sigma\) is a bridge sphere of \(K\) with \(2n\) punctures. By construction, each
  component of \(K\cap B^-\) and \(K\cap B^+\) is a trivial tangle.
\end{proof}

In the proofs that follow, vertices that are incident to an edge in page \(U\) are treated
differently than those that are not, so we name them.

\begin{definition}\label{D:terminal vertex}
  A \emph{terminal vertex} is a vertex with an incident edge in page \(U\). A \emph{non-terminal
  vertex} is a vertex with no incident edge in page \(U\).
\end{definition}

We will later use an embedding with weighted edges for notational and computational convenience,
but that notation is equivalent to this \(3\)-page bridge embedding and so to a bridge presentation
of the knot.

\begin{lemma}\label{L:quotient}
  Given binding-adjacent non-terminal vertices \(v_1\) and \(v_2\) of a \(3\)-page bridge
  embedding with no arc \(v_1v_2\), quotienting together \(v_1\) and \(v_2\) is reversible.
\end{lemma}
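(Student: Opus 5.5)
The plan is to give an explicit inverse operation and check that it recovers the original embedding up to isotopy through $3$-page bridge embeddings. First I will record what the quotient does combinatorially. Since $v_1$ and $v_2$ are non-terminal, each has at most one edge in $S$ and at most one in $N$, and no edge in $U$. A non-terminal vertex with two incident edges therefore has exactly one arc in $S$ and one in $N$; a vertex with no incident edges is an isolated puncture. Because $v_1$ and $v_2$ are binding-adjacent, no puncture lies between them on the binding equator. Hence sliding $v_2$ along the binding toward $v_1$ meets no other arc endpoint, and identifying them is a well-defined operation on the cyclic order. The hypothesis that there is no arc $v_1v_2$ ensures the merged vertex w has no loop edge.

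Next I will describe the quotient vertex $w$. It inherits the $S$- and $N$-arcs of $v_1$ and of $v_2$. I will treat three cases.
\begin{itemize}
\item If one of the two vertices is isolated, the quotient just deletes that puncture. This is trivially reversible by inserting an isolated puncture next to the other vertex.
\item If both carry arcs, then $w$ has degree four, with two edges in $S$ and two in $N$.
\item Within $S$, the two arcs at $w$ came from adjacent endpoints $v_1, v_2$ of disjoint arcs. So they leave $w$ in a definite cyclic order around $w$ inside the disc $S$, and likewise in $N$.
\end{itemize}

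The inverse is then to split $w$ back into two adjacent punctures. Planarity of page $S$ forces which $S$-arc goes to the left puncture and which to the right, namely the one that does not cross the other. The same holds in $N$. The subtle point, and the step I expect to be the main obstacle, is pairing the $S$-arc with the $N$-arc at each new puncture. A priori, the left $S$-arc could be attached to the right $N$-arc, giving a different knot. I would resolve this by recording the side ordering: on the binding equator, the left side of $w$ is shared by $S$ and $N$. Disjointness in each page therefore forces the left $S$-arc and the left $N$-arc to be exactly those that previously met at the same vertex, namely whichever of $v_1, v_2$ was on the left. So the splitting is uniquely determined by the planar cyclic data at $w$ together with the requirement that pages remain crossing-free.

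Finally, I will check that the split object satisfies every clause of the definition of a $3$-page bridge embedding:
\begin{itemize}
\item each page is crossing-free, by the non-crossing choice above;
\item every puncture has degree zero or two, with at most one edge per page;
\item there are no $U$-edges at the new punctures, consistent with non-terminality;
\item the traced curve is a single loop, since the connectivity is that of the original.
\end{itemize}
The composite split$\circ$quotient is then the identity on the combinatorial data, which gives reversibility.
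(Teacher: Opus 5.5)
Your proposal is correct and follows essentially the paper's argument. Both dispose of the case where one vertex is empty as trivial, and both show that the non-crossing condition allows only one way to split the merged vertex back into $v_1$ and $v_2$, so any swapped reconstruction is ruled out. Your version forces the assignment separately in page $S$ and in page $N$, which then fixes the $S$--$N$ pairing at each new puncture automatically; this is a cleaner rendering of the paper's linear-order argument with paths $a v_1 b$ and $c v_2 d$.
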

\begin{proof}
  See \Fig{quotient}.
  If either \(v_1\) or \(v_2\) is empty, this is trivially true, so suppose neither is empty.

  Without loss of generality, let one path include \(av_1b\) and the other include \(cv_2d\),
  and assume \(a < b < c < d\) in some linearization of the cyclic binding order.
  Because these do not cross, \(v_1 \leq v_2\). Recovering the pair \(avb, cvd\) as \(av_2b, cv_1d\)
  would force \(v_2 < v_1\), but \(v_1 \leq v_2\), so the reconstruction is
  well-defined.
\end{proof}

\setFloat{htbp}
\nextLabel{fig:quotient}
\begin{figure}
  \centering
  \begin{subfigure}[t]{0.31\textwidth}
    \centering
    \begin{tikzpicture}
      \node (a) at (0, 2) {\(a\)};
      \node (b) at (0, 0) {\(b\)};
      \node (c) at (2, 2) {\(c\)};
      \node (d) at (2, 0) {\(d\)};
      \node (v1) at (0.33, 1) {\(v_1\)};
      \node (v2) at (1.67, 1) {\(v_2\)};
      \draw (a) -- (v1) -- (b);
      \draw (c) -- (v2) -- (d);
      \draw [local binding] (0, 1) -- (v1) -- (v2) -- (2, 1);
      \draw [local binding] (a) -- (c);
      \draw [local binding] (b) -- (d);
    \end{tikzpicture}
    \caption{Before vertex quotient}
  \end{subfigure}
  \hfill
  \begin{subfigure}[t]{0.31\textwidth}
    \centering
    \begin{tikzpicture}
      \node (a) at (0, 2) {\(a\)};
      \node (b) at (0, 0) {\(b\)};
      \node (c) at (2, 2) {\(c\)};
      \node (d) at (2, 0) {\(d\)};
      \node (v) at (1, 1) {\(v\)};
      \draw (a) -- (v) -- (b);
      \draw (c) -- (v) -- (d);
      \draw [local binding] (0, 1) -- (v) -- (2, 1);
      \draw [local binding] (a) -- (c);
      \draw [local binding] (b) -- (d);
    \end{tikzpicture}
    \caption{With vertex quotient}
  \end{subfigure}
  \hfill
  \begin{subfigure}[t]{0.31\textwidth}
    \centering
    \begin{tikzpicture}
      \node (a) at (0, 2) {\(a\)};
      \node (b) at (0, 0) {\(b\)};
      \node (c) at (2, 2) {\(c\)};
      \node (d) at (2, 0) {\(d\)};
      \node (v1) at (0.33, 1) {\(v_1\)};
      \node (v2) at (1.67, 1) {\(v_2\)};
      \draw (a) -- (v2) -- (b);
      \draw (c) -- (v1) -- (d);
      \draw [local binding] (0, 1) -- (v1) -- (v2) -- (2, 1);
      \draw [local binding] (a) -- (c);
      \draw [local binding] (b) -- (d);
    \end{tikzpicture}
    \caption{Erroneous reconstruction violates non-crossing}
  \end{subfigure}
  \caption{Visual proof of \Lemma{quotient}. Vertices \(v_1\) and \(v_2\) are
    adjacent by assumption and non-terminal, so they have an even number of incident
    edges. The other vertices may be terminal or non-terminal.}\label{\pendingLabel}
\end{figure}

Later, we will drop the requirement that the adjacent non-terminal vertices have no edge
between them (see \Lemma{zipper}) and maintain the same knot, but at the cost of not
maintaining the same embedding.

\subsection{Construction of the \(3\)-Page Bridge Embedding}\label{S:construction}
For a knot shadow with \(n\) crossings, the existence of a subhamiltonian cycle of the graph
vertices and an \(O(n^2)\)-time algorithm to find that cycle are proven by Bekos,
Gronemann, and Raftopoulou \cite[Theorem 3]{Hamiltonian}. We use this result to construct a \(3\)-page bridge
embedding of the knot diagram, which is particularly nice to work with for bridge reduction.

We begin with a standard knot diagram \(K_D\). Using the method of \cite{Hamiltonian},
we construct a subhamiltonian cycle on the intersections of the knot shadow. These intersections
correspond to the knot crossings, so we have constructed a subhamiltonian cycle on the crossings.
This cycle is almost the binding equator. Creating the binding equator requires only separating
the pieces onto the three pages of our embedding. We do this now.

\Figure{crossing} shows the different cases for the modification to the binding equator.
In all cases, general position gives us either \(3\) or \(4\) intersections between the binding equator and
the knot. We use \(2\) as bridge ends for each crossing, \(1\) point for the crossing of the
bridge over the binding, and in some cases \(1\) additional point to exit via the desired face.
In all cases, the bridge alternates between sides of the binding equator, corresponding with pages
\(S\) and \(N\) and maintaining the necessary matching condition between the pages.

In words: suppose without loss of generality that the crossing runs up--down and left--right,
dividing the local neighborhood into \(4\) quadrants, which we give the conventional names
counter-clockwise from the top right corner. Let the left--right strand be the
underpassing strand. Place cuts left and right of the crossing. Suppose that the binding equator
arrives through quadrant I. In all cases, it should touch both cuts in sequence to allow pushing
the underpass to the boundary of page \(U\), then depart through one of the quadrants. Every
quadrant has one cut mark, so we lose no generality by assuming one end of the binding equator
is in quadrant I. We examine cases for the four possible departure quadrants. If we depart through
quadrant I, touch right, left, then cross up, and depart. If we depart through quadrant II
or III, touch right, left, and depart. If we depart through quadrant IV, touch right, left,
then cross down, and depart.

\nextLabel{fig:crossing}
\begin{figure}
  \centering
  \begin{subfigure}[t]{0.23\textwidth}
    \centering
    \begin{tikzpicture}
      \coordinate (l) at (0, 1);
      \coordinate (r) at (2, 1);
      \coordinate (u) at (1, 2);
      \coordinate (d) at (1, 0);
      \draw (l) -- (0.6, 1);
      \draw (1.4, 1) -- (r);
      \draw (0.6, 1.3) -- (0.6, 0.7);
      \draw (1.4, 1.3) -- (1.4, 0.7);
      \draw (u) -- (d);
      \draw[local binding] (1.5, 2) -- (0.6, 1) -- (1.4, 1) -- (2, 2);
    \end{tikzpicture}
    \caption{I to I}
  \end{subfigure}
  \hfill
  \begin{subfigure}[t]{0.23\textwidth}
    \centering
    \begin{tikzpicture}
      \coordinate (l) at (0, 1);
      \coordinate (r) at (2, 1);
      \coordinate (u) at (1, 2);
      \coordinate (d) at (1, 0);
      \draw (l) -- (0.6, 1);
      \draw (1.4, 1) -- (r);
      \draw (0.6, 1.3) -- (0.6, 0.7);
      \draw (1.4, 1.3) -- (1.4, 0.7);
      \draw (u) -- (d);
      \draw[local binding] (2, 2) -- (1.4, 1) -- (0.6, 1) -- (0, 2);
    \end{tikzpicture}
    \caption{I to II}
  \end{subfigure}
  \hfill
  \begin{subfigure}[t]{0.23\textwidth}
    \centering
    \begin{tikzpicture}
      \coordinate (l) at (0, 1);
      \coordinate (r) at (2, 1);
      \coordinate (u) at (1, 2);
      \coordinate (d) at (1, 0);
      \draw (l) -- (0.6, 1);
      \draw (1.4, 1) -- (r);
      \draw (0.6, 1.3) -- (0.6, 0.7);
      \draw (1.4, 1.3) -- (1.4, 0.7);
      \draw (u) -- (d);
      \draw[local binding] (2, 2) -- (1.4, 1) -- (0.6, 1) -- (0, 0);
    \end{tikzpicture}
    \caption{I to III}
  \end{subfigure}
  \hfill
  \begin{subfigure}[t]{0.23\textwidth}
    \centering
    \begin{tikzpicture}
      \coordinate (l) at (0, 1);
      \coordinate (r) at (2, 1);
      \coordinate (u) at (1, 2);
      \coordinate (d) at (1, 0);
      \draw (l) -- (0.6, 1);
      \draw (1.4, 1) -- (r);
      \draw (0.6, 1.3) -- (0.6, 0.7);
      \draw (1.4, 1.3) -- (1.4, 0.7);
      \draw (u) -- (d);
      \draw[local binding] (2, 2) -- (1.4, 1) -- (0.6, 1) -- (2, 0);
    \end{tikzpicture}
    \caption{I to IV}
  \end{subfigure}

  \caption{Construction of travel for the binding equator in a \(3\)-page bridge embedding,
  with the horizontal gap representing the underpass.}\label{\pendingLabel}
\end{figure}

\begin{lemma}\label{L:firstLength}
  The construction using the subhamiltonian cycle produces a \(3\)-page bridge embedding, with
  each crossing contributing \(1\) or \(2\) crossings between a bridge and the binding equator,
  giving at most \(4n\) letters to describe all bridges of an \(n\)-crossing knot diagram.
\end{lemma}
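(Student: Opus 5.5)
The plan is to check the defining conditions of a \(3\)-page bridge embedding one at a time against the local construction of \Figure{crossing}. After that, the letter count follows by summing the contributions of the individual crossings.

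The construction begins with the subhamiltonian cycle of \cite[Theorem 3]{Hamiltonian} on the crossings of the shadow. Since this cycle is a planar simple closed curve disjoint from the interiors of edges except where edges are added, it separates the plane into two discs together with the point at \(\infty\). These discs will be pages \(S\) and \(N\), and the cycle, compactified, becomes the binding equator.

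The first step is the local modification near each crossing. The binding is rerouted as in the four cases of \Figure{crossing}: it touches the two cut points of the underpassing strand and possibly crosses the overpassing strand once more to leave through the required quadrant. Between the cuts, the underpass segment is pushed onto the binding, so it lies along the equator in page \(U\). Because the local pieces at distinct crossings have disjoint supports, the underpass arcs in \(U\) are pairwise disjoint. Their endpoints are exactly the cut punctures, so these are the terminal vertices.

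The second step checks the overstrand. It meets the binding in one point in cases I--II and I--III, and in two points in cases I--I and I--IV. Away from the crossing neighbourhoods, the knot's edges lie in a single face of the cycle, since a subhamiltonian cycle can be chosen to avoid crossing edges except at the added chords. The main obstacle is making this precise. I would handle it by perturbing every edge of the shadow to be transverse to the augmented cycle, and arguing that only the edges crossed by the added subhamiltonian edges contribute further intersections. The bound I aim for is that each edge endpoint's exit is absorbed into the local picture by the choice of departure quadrant.

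The third step is to check the remaining conditions. Each arc interior lies in one of \(S\) or \(N\). The bridge strands alternate sides at each equator intersection, which gives the matching condition. Every puncture has valence two with at most one edge per page: a cut puncture has one edge in \(U\) and one in \(S\cup N\), and a transverse crossing point has one edge on each side. The pages are non-crossing because the original diagram has crossings only at the vertices, and these have been resolved. Following the arcs traces the original knot, so the curve is a single loop.

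For the count, each crossing contributes two bridge endpoints and one or two binding crossings of the overstrand. I would encode each such event by one letter in the word describing the bridges, which gives at most \(2+2=4\) letters per crossing. Summing over the \(n\) crossings gives at most \(4n\) letters.
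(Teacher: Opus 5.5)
Your outline is the paper's argument. The paper's proof is just ``by construction'' for the embedding conditions (underpasses pushed into \(U\), bridges alternating between \(S\) and \(N\) between terminal punctures), followed by the same count of at most \(2\) endpoint letters and at most \(2\) binding crossings per crossing. Your third step and your \(2+2\) count are a faithful expansion of that. The problem is your second step, which you yourself flag as the main obstacle.

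There you plan to allow intersections between shadow edges and the added edges of the subhamiltonian cycle, and then to absorb them through the choice of departure quadrant. That mechanism cannot work in general. Only an intersection lying immediately next to the crossing from which the added edge emanates could be slid into the local picture by re-choosing a quadrant. An intersection elsewhere along a shadow edge is an extra point where some bridge meets the binding, hence an extra letter, and no quadrant choice affects it. The bound would then degrade to \(4n\) plus the number of such intersections.

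The missing idea is that there are no such intersections at all. A subhamiltonian cycle is a Hamiltonian cycle of a planar supergraph, so each added edge lies inside a face of the shadow and meets the shadow only at its two end crossings. Where the cycle instead uses a shadow edge, you push the binding off that edge to one side; this only fixes which quadrants are used at the edge's two ends. Consequently, after the local modifications every point of the knot on the binding lies in a crossing neighbourhood and is one of the \(3\) or \(4\) points of the local pictures. This is exactly what both the embedding conditions and the \(4n\) count need.

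This argument requires the planar supergraph to be drawn as an extension of the given plane embedding of the shadow, so that the strands through each crossing are unchanged. The paper uses this implicitly when it invokes the subhamiltonian-cycle result. You should state it as a hypothesis rather than try to engineer around crossings with added edges.
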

\begin{proof}
  By construction, every underpass lies on the boundary of the binding equator and can be pushed
  to page \(U\) with the anti-matching condition. Also by construction, every bridge starts on the
  binding equator, alternates its arcs between pages \(S\) and \(N\), then ends on the binding
  equator with the anti-matching condition.

  The count of length contributions from bridges is a direct consequence of the sum of \(n\)
  crossings, each adding no more than \(2\) arcs for bridges plus at most \(2\) letters for the
  bridge ends.
\end{proof}

Having this embedding and, from \Lemma{quotient}, that we do not need to explicitly
distinguish between adjacent non-terminal vertices, we can give a simple
grammar that characterizes the diagram, the embedding, and the bridge presentation simultaneously.

In \cite{TarLinks}, this author described a grammar to characterize embeddings of links in \(S^3\).
There are three relevant definitions for the present problem of bridge-count minimization.

\begin{definition}[embedding grammar]
  Each bridge endpoint (equivalently, underpass endpoint) is given a distinct label, its
  \emph{letter}.

  The binding equator is written as the \emph{circle word}. This word begins with a semicolon
  symbol \ttt{;} and then lists, in order as they occur moving eastward (anti-clockwise) on the
  binding equator, the names of the terminal vertices.

  The path of each bridge is described by the \emph{bridge word} of that bridge. The bridge word
  begins with either \ttt{+} or \ttt{-}, indicating that its first arc lies in page \(N\)
  or page \(S\), respectively. Each point at which the interior of a bridge crosses the binding
  equator is listed as the symbol of the terminal vertex nearest in the westward direction.

  By construction, the path of each underpass is exactly one arc between adjacent terminal
  vertices, and we omit these because they can be reconstructed from the circle word.
\end{definition}

An example is helpful. \Figure{trefoil} shows a trefoil in a \(3\)-page bridge embedding.
In the figure, the underpasses are omitted but can be read from the circle word. They lie on the
boundary of page \(U\), joining \ttt{0} with \ttt{1}, \ttt{2} with \ttt{3}, and
\ttt{4} with \ttt{5}. The interior of the circle shows page \(N\), and the exterior shows
page \(S\). Moving counter-clockwise around the circle is moving east on the binding equator.

\nextLabel{fig:trefoil}
\begin{figure}[!htbp]
  \centering
  \begin{tikzpicture}
    \draw[local binding] (0, 0) circle[radius=1];
    \node[fill=diagrambg, inner sep=2pt] (v1) at (0:1) {0};
    \node[fill=diagrambg, inner sep=2pt] (v2) at (80:1) {1};
    \node[fill=diagrambg, inner sep=2pt] (v5) at (120:1) {4};
    \node[fill=diagrambg, inner sep=2pt] (v6) at (200:1) {5};
    \node[fill=diagrambg, inner sep=2pt] (v3) at (240:1) {2};
    \node[fill=diagrambg, inner sep=2pt] (v4) at (320:1) {3};
    \draw[local strand] (v2) -- (80:0.5) arc[start angle=80, end angle=160, radius=0.5] -- (160:1.5) arc[start angle=160, end angle=240, radius=1.5] -- (v3);
    \draw[local strand] (v4) -- (320:0.5) arc[start angle=320, end angle=400, radius=0.5] -- (40:1.5) arc[start angle=40, end angle=120, radius=1.5] -- (v5);
    \draw[local strand] (v6) -- (200:0.5) arc[start angle=200, end angle=280, radius=0.5] -- (280:1.5) arc[start angle=280, end angle=360, radius=1.5] -- (v1);
  \end{tikzpicture}
  \caption{Trefoil in a \(3\)-page bridge embedding with sentence \ttt{+142+304+520;014523}.
    The sentence \ttt{+142+304+520;145230} encodes a trivial knot by forcing a different
    pairing of vertices to underpasses.}
  \label{\pendingLabel}
\end{figure}

In this paper, we impose some restrictions on the grammar not present in the fully general
grammar of \cite{TarLinks}. First, we consider only ordinary crossings, omitting the other types
of crossings (e.g., virtual crossings). Second, we require that the \emph{word}
of each underpass \(ab\) is \ttt{+ab} with \(a\) and \(b\) adjacent terminal vertices
on the binding equator (ignoring intervening non-terminal vertices). This makes the words'
topological possibilities legible directly from that characterization, without geometric
reconstruction, and we omit the underpass words. Finally, we require that the circle word's
first two letters are the ends of a shared underpass, which prevents ambiguity about matching
for the ends of underpasses.

\section{Metagrammar Rules}
We describe a metagrammar here, which can be turned into a full grammar according to the specific
diagram, and this metagrammar is unambiguous due to \Lemma{quotient}. See \Table{symbols}
for the meaning of symbols when not otherwise defined.

\nextLabel{tab:symbols}
\begin{table}
  \caption{Metagrammar symbols}\label{\pendingLabel}
  \begin{tabular}{l|l}
    \textbf{Symbol} & \textbf{Meaning}\\
    \hline
    \ottt{a} & Terminal vertex name\\
    \ttt{a} & Non-terminal vertex (adjacent to \ottt{a} if relevant)\\
    \(\ttt{a}^{\circ}, \ttt{a}'\) & Vertex \ttt{a} on (resp., not on) underpass with end \ottt{a}\\
    \(\overrightarrow{\ttt{W}}\) & Well-formed fragment of a word\\
    \ttt{A} & Bridge word starting or ending with vertex \ottt{a}\\
    \ottt{ab} & Underpass with ends \ottt{a} and \ottt{b}\\
    \(\pm\) & Sign, as \ttt{+} or \ttt{-}\\
    \(\mp\) & Opposite sign of \(\pm\)\\
    \(\ttt{A}'\) & Bridge-looping word of bridge \ttt{A} (see \Definition{bridge-loop})\\
    \ttt{A\#B} & Concatenated value of \ttt{A} and \ttt{B} (see \Definition{concatenation})\\
    \(\emptyset\) & \text{Empty string}
  \end{tabular}
\end{table}

We have four types of metagrammar rules, listed in \Table{rules}. The normalization,
quotient, and underpass closure rules are eagerly applied. The underpass closure move is
conditioned on whether either of a specific pair of symbols appears in any bridge word.
Underpass avoidance is likewise conditioned on the absence of a specific symbol
in at least one of the bridges leading from the underpass. No rule introduces new symbols or
revives previously eliminated symbols, which will be important to containing the complexity
of the eventual construction. We note that none of the rules requires a specific symbol count,
though some require knowing the presence (or absence) of a symbol in a word or words and some
require knowing the odd/even parity of the number of occurrences of a symbol.

We note here that the choice of non-eager moves and, specifically, which order to perform underpass
avoidances and via which bridge, results in a system that is \emph{not} proven to be confluent.
That is, different choices and different ordering may result in different end states,
which is why we can guarantee only local minimality of the number of bridges in the final diagram
in light of the results from Ozawa and Takao \cite{OzawaTakaoLocalBridge} showing that local
bridge minimality without global bridge minimality is possible.

\nextLabel{tab:rules}
{\newcommand{\blankrow}{\rule{0pt}{0.5ex} & & & \\[-1ex]}
\begin{table}
  \caption{Metagrammar rules}\label{\pendingLabel}
  \begin{tabular}{l|rl|l}
    \textbf{Rule name} & \multicolumn{2}{c|}{\textbf{Rule}} & \textbf{Notes}\\
    \hline
    \blankrow
    \multirow{3}{*}{Normalization} & \(\pm\ottt{a}\ttt{a}\) & \(\mapsto \mp\ottt{a}\) & Removes trivial initial arc\\
    & \ttt{a}\ottt{a} & \(\mapsto \ottt{a}\) & Removes trivial terminal arc\\
    & \ttt{aa} & \(\mapsto \emptyset\) & Removes trivial internal arc\\
    \blankrow
    \hline
    \blankrow
    \multirow{4}{*}{Loop snipping} & \(\ttt{a}\overrightarrow{\ttt{W}}\ttt{b}\) & \(\mapsto \ttt{ab}\) &
      \multirow{4}{*}{\shortstack[l]{Requires: Forms an empty loop;\\\(\overrightarrow{\ttt{W}}\) has positive, even length;\\(see \Definition{loop-snipping}).}}\\
    & \(\ottt{a}\overrightarrow{\ttt{W}}\ttt{b}\) & \(\mapsto \ottt{a}\ttt{b}\) &\\
    & \(\ttt{b}\overrightarrow{\ttt{W}}\ottt{a}\) & \(\mapsto \ttt{b}\ottt{a}\) &\\
    & \(\ottt{a}\overrightarrow{\ttt{W}}\ottt{b}\) & \(\mapsto \ottt{a}\ottt{b}\) &\\
    \blankrow
    \hline
    \blankrow
    Underpass avoidance & \(\ttt{a}^{\circ}\) & \(\mapsto \ttt{A}'\) &
      Requires: \(\ttt{a}^{\circ} \notin \ttt{A}\)\\
    \blankrow
    \hline
    \blankrow
    Underpass closure & \(\ttt{A}, \ttt{B}\) & \(\mapsto \ttt{A\#B}\)
      & Requires: \(\exists\,\ottt{ab}; \forall\ttt{C}\ \,\ttt{a}^{\circ},\ttt{b}^{\circ}\notin \ttt{C}\)\\
    \blankrow
    \hline
    \blankrow
    Quotient & \ttt{a} & \(\mapsto \ttt{b}\) & Requires: \ttt{a} and \ttt{b} adjacent
  \end{tabular}
\end{table}}

\begin{definition}[bridge concatenation]\label{D:concatenation}
  Let \ttt{A} and \ttt{B} be bridge words sharing an underpass \ottt{ab},
  with \(\ttt{A} = \pm\overrightarrow{\ttt{V}}\ottt{a}\) for a well-formed fragment \(\overrightarrow{\ttt{V}}\)
  and \(\ttt{B} = \pm\ottt{b}\overrightarrow{\ttt{W}}\) for a well-formed fragment \(\overrightarrow{\ttt{W}}\).
  The bridge concatenation \ttt{A\#B} is \(\pm\overrightarrow{\ttt{V}}\overrightarrow{\ttt{W}}\) (resp., \(\pm\overrightarrow{\ttt{V}}\ttt{a}\overrightarrow{\ttt{W}}\))
  if the final arc of \ttt{A} and the initial arc of \ttt{B} share (resp., do not share)
  a page.
\end{definition}
\begin{remark}
  We may, if needed, replace any bridge word starting at \ottt{a} and ending at \ottt{b}
  with a word starting at \ottt{b} and ending at \ottt{a} by reversing the order of the
  letters and, if the number of letters is odd, swapping the sign. These two encode
  the same arcs on the same pages.
\end{remark}

\begin{definition}[bridge-looping word]\label{D:bridge-loop}
  The \emph{bridge-looping word} \(\ttt{A}'\) is the characterization of a
  trivial knot passing around bridge \ttt{A} and around no other bridge.
\end{definition}
\begin{remark}
  Let \(\pm\ottt{a}\ttt{W}\ottt{c}\) be a bridge word \ttt{A} or
  \ttt{C} with well-formed (possibly empty) fragment \ttt{W}. Because a bridge can
  be named from either end, \ttt{A} and \ttt{C} are the same word if and only if
  a bridge runs between \(\ottt{a}\) and \(\ottt{c}\).
  Let \(\overleftarrow{\ttt{W}}\) be \ttt{W} in reversed order. A characterization
  of \(\ttt{A}'\) is either
  \(\pm\ttt{a}^{\circ}\ttt{Wc}^{\circ}\ttt{c}'\overleftarrow{\ttt{W}}\ttt{a}'\ttt{a}^{\circ}\) or
  \(\pm\ttt{a}^{\circ}\ttt{Wc}'\ttt{c}^{\circ}\overleftarrow{\ttt{W}}\ttt{a}'\ttt{a}^{\circ}\).
  Which is correct depends on the odd/even parity of the bridge-word length and the relative
  positions of the two vertices bounding the underpass incident to \ttt{c}. One of these
  words will cross \ttt{A} and the other will not.

  Because \(\ttt{A}'\) is the characterization of a trivial knot, it forms a \(1\)-bridge
  cycle and lives on pages \(S \cup N\). Thus, it has no terminal vertices.
  For use in substitution, the bridge-looping word will be in one of two positions:
  (1) The position written here; (2) The opposite orientation, found by reversing the order of
  the letters and swapping the sign.

  The necessity to consider various cases for the bridge-looping word is a consequence of choosing
  a linearization and orientation of an unoriented cycle. We note that when underpass
  \ottt{ab} exists, the vertices \(\ttt{a}^{\circ}\) and \(\ttt{b}^{\circ}\)
  are the same vertex. The difference between using \(\ttt{A}'\) and using \(\ttt{B}'\)
  is which of the two bridges incident to \ottt{ab}
  is used to avoid \ottt{ab}.

  If a bridge-looping word is considered as a set of arcs each characterized
  by a pair of vertices and a page, then the two possible words \emph{as geometry} are identical.
  The difference is because the representation as arc counts does not encode an orientation
  or a start.

  We explicitly note that the length of the replacement word for a bridge with \(k\) letters
  is \(2k + 1\). This matters for the length of the grammar produced by the metagrammar.

  When used for underpass avoidance as a replacement, the initial and terminal \(\ttt{a}^{\circ}\) are 
  omitted because we are splitting letters \(\ttt{xa}^{\circ}\ttt{y}\) into arcs \ttt{xa} and
  \ttt{ay} then splicing in the new word, so normalization would immediately eliminate these
  letters. The growth is therefore by \(2k - 1\) letters before any other normalization that may
  occur. See \Definition{bridge-loop-arcs} for the definition in arc form, which is more
  natural and does not have special cases.
\end{remark}

\Figure{avoid} shows the geometry corresponding to the grammatical rule and why this
is useful. By putting the additional trivial knot into the diagram looping \ttt{B} and
performing standard knot composition, the underpass incident to \ottt{b} is
avoided. Because this is knot composition with a trivial knot, the type of knot is unchanged.
Alternatively, but with the same result, we can view the avoidance as an ambient isotopy of the
arc crossing over the underpass or, to use Tawn's characterization in
\cite[\S4]{TawnCutComplex,TawnErratum}, as the replacement of an arc by forming a loop using the arc and
substituting the complement in that loop for the original arc.

No matter how it is viewed, the knot type and the bridge presentation are preserved, up to
the unambiguous recovery of vertices by \Lemma{quotient}, but the diagram is obviously
changed.

\nextLabel{fig:avoid}
\begin{figure}
  \centering
  \begin{subfigure}[t]{0.47\textwidth}
    \centering
    \begin{tikzpicture}
      \node (v) at (0, 2) {\(\ttt{v}^*\)};
      \node (w) at (0, 0) {\(\ttt{w}^*\)};
      \node (x) at (2, 2) {\ttt{x}};
      \node (y) at (2, 0) {\ttt{y}};
      \node (a) at (0, 1) {\ottt{a}};
      \node (b) at (2, 1) {\ottt{b}};
      \coordinate (c) at (2.5, 1);
      \node (b1) at (0.66, 1) {\(\ttt{b}_1^{\circ}\)};
      \node (b2) at (1.33, 1) {\(\ttt{b}_2^{\circ}\)};
      \draw (v) to[out=0, in=90] (b1) to[out=270, in=0] (w);
      \draw (x) to[out=180, in=90] (b2) to[out=270, in=180] (y);
      \draw [local binding] (a) -- (b1) -- (b2) -- (b);
      \draw[dashed, rounded corners] (x) to[out=0, in=90] (c) to[out=270, in=0] (y);
    \end{tikzpicture}
    \caption{Before avoidance, looping \ttt{B}.}
  \end{subfigure}
  \hfill
    \begin{subfigure}[t]{0.47\textwidth}
    \centering
    \begin{tikzpicture}[xscale=0.75, every node/.append style={fill=diagrambg, inner sep=2pt}]
        \node (v) at (0, 2) {\(\ttt{v}^*\)};
        \node (w) at (0, 0) {\(\ttt{w}^*\)};
        \node (x) at (5, 2) {\ttt{x}};
        \node (y) at (5, 0) {\ttt{y}};
        \node (a) at (0, 1) {\ottt{a}};
        \node (b) at (5, 1) {\ttt{b}};
        \coordinate (c) at (5.5, 1);
        \coordinate (b1) at (1, 1);
        \coordinate (b2) at (2, 1);
        \coordinate (b3) at (3, 1);
        \coordinate (b4) at (4, 1);
        \draw (v) to[out=0, in=90] (b1) to[out=270, in=0] (w);
        \draw (x) to[out=180, in=90] (b2) to[out=270, in=270] (b3)
        to [out=90, in=90] (b4) to[out=270, in=180] (y);
        \draw[local binding] (a) -- (b1) -- (b);
        \node at (b1) {\(\ttt{b}^{\circ}_1\)};
        \node at (b2) {\(\ttt{b}^{\circ}_2\)};
        \node at (b3) {\(\ttt{b}^{\circ}_3\)};
        \node at (b4) {\(\ttt{b}^{\circ}_4\)};
        \draw[dashed, rounded corners] (x) to[out=0, in=90] (c) to[out=270, in=0] (y);
    \end{tikzpicture}
    \caption{Denormalize the loop.}
    \end{subfigure}
    \begin{subfigure}[t]{0.47\textwidth}
    \centering
    \begin{tikzpicture}[xscale=0.75, every node/.append style={fill=diagrambg, inner sep=2pt}]
        \node (v) at (0, 2) {\(\ttt{v}^*\)};
        \node (w) at (0, 0) {\(\ttt{w}^*\)};
        \node (x) at (5, 2) {\ttt{x}};
        \node (y) at (5, 0) {\ttt{y}};
        \node (a) at (0, 1) {\ottt{a}};
        \node (b) at (5, 1) {\ottt{b}};
        \coordinate (c) at (5.5, 1);
        \coordinate (b1) at (1, 1);
        \coordinate (b2) at (2, 1);
        \coordinate (b3) at (3, 1);
        \coordinate (b4) at (4, 1);
        \draw (w) to[out=0, in=270] (b3);
        \draw (v) to[out=0, in=90] (b1);
        \draw (b2) to[out=270, in=270] (b1);
        \draw (x) to[out=180, in=90] (b2);
        \draw (b3) to [out=90, in=90] (b4) to[out=270, in=180] (y);
        \draw[local binding] (a) -- (b1) -- (b);
        \node at (b1) {\(\ttt{b}^{\circ}_1\)};
        \node at (b2) {\(\ttt{b}^{\circ}_2\)};
        \node at (b3) {\(\ttt{b}^{\circ}_3\)};
        \node at (b4) {\(\ttt{b}^{\circ}_4\)};
        \draw[dashed, rounded corners] (x) to[out=0, in=90] (c) to[out=270, in=0] (y);
    \end{tikzpicture}
    \caption{Perform surgery on the ends.}
    \end{subfigure}
  \hfill
  \begin{subfigure}[t]{0.47\textwidth}
    \centering
    \begin{tikzpicture}
      \node (v) at (0, 2) {\(\ttt{v}^*\)};
      \node (w) at (0, 0) {\(\ttt{w}^*\)};
      \node (x) at (2, 2) {\ttt{x}};
      \node (y) at (2, 0) {\ttt{y}};
      \node (a) at (0, 1) {\ottt{a}};
      \node (b) at (2, 1) {\ottt{b}};
      \coordinate (c) at (2.5, 1);
      \draw (v) -- (x);
      \draw (w) -- (y);
      \draw [local binding] (a) -- (b);
      \draw[dashed, rounded corners] (x) to[out=0, in=90] (c) to[out=270, in=0] (y);
    \end{tikzpicture}
    \caption{Normalization clears the crossings.}
  \end{subfigure}
  \caption{Geometry of avoiding underpass \(\overline{\ttt{ab}}\) by composition with
    \(\|\ttt{v}^*\|\) copies of the
    loop around bridge \ttt{B}. The loop is a trivial knot, and the dashed
    arc indicates the remainder of that loop. By construction, vertices
    \ottt{a} and \ottt{b} are terminal, \(\ttt{v}^*\) and \(\ttt{w}^*\)
    are multisets of vertices that may be terminal or non-terminal,
    and \ttt{x} and \ttt{y} are non-terminal.}
  \label{\pendingLabel}
\end{figure}

\begin{definition}[loop snipping]\label{D:loop-snipping}
  Let \(\ttt{a}\overrightarrow{\ttt{W}}\ttt{b}\) be a well-formed
  fragment of a bridge word for bridge \ttt{C}, and let the length of
  \(\overrightarrow{\ttt{W}}\) be even. Either or both of \ttt{a}
  and \ttt{b} may be terminal.

  A \emph{loop-snipping} move replaces \(\ttt{a}\overrightarrow{\ttt{W}}\ttt{b}\)
  with \ttt{ab}. This is only permitted if no bridge has any equator
  crossings between \ttt{a} and \ttt{b} proceeding either east or west from
  \ttt{a}. See \Fig{loop-snip} for the geometric interpretation.
\end{definition}

\nextLabel{fig:loop-snip}
\begin{figure}
  \centering
  \begin{subfigure}[t]{0.47\textwidth}
    \centering
    \begin{tikzpicture}[yscale=0.75]
      \node (v) at (0, 2) {\ttt{v}};
      \node (w) at (2, 2) {\ttt{w}};
      \node (a) at (0.66, 1) {\ttt{t}};
      \node (b) at (1.33, 1) {\ttt{u}};
      \node (x) at (0, 0) {\(\ttt{x}^*\)};
      \node (y) at (2, 0) {\(\ttt{y}^*\)};
      \draw (v) to[out=270, in=90] (a);
      \draw (w) to[out=270, in=90] (b);
      \draw (a) to[out=270, in=0] (x);
      \draw (b) to[out=270, in=180] (y);
      \draw [local binding] (0, 1) -- (a) -- (b) -- (2, 1);
      \draw[dashed, rounded corners] (v) to[out=60, in=120] (w);
    \end{tikzpicture}
    \caption{Before snipping the loop.}
  \end{subfigure}
  \hfill
  \begin{subfigure}[t]{0.47\textwidth}
    \centering
    \begin{tikzpicture}[yscale=0.75]
      \node (v) at (0, 2) {\ttt{v}};
      \node (w) at (2, 2) {\ttt{w}};
      \node (a) at (0.66, 1) {\ttt{t}};
      \node (b) at (1.33, 1) {\ttt{u}};
      \node (x) at (0, 0) {\(\ttt{x}^*\)};
      \node (y) at (2, 0) {\(\ttt{y}^*\)};
      \draw (a) to[out=270, in=0] (x);
      \draw (b) to[out=270, in=180] (y);
      \draw (a) to[out=90, in=90] (b);
      \draw [local binding] (0, 1) -- (a) -- (b) -- (2, 1);
      \draw[dashed, rounded corners] (v) to[out=60, in=120] (w);
      \draw (v) to[out=330, in=210] (w);
    \end{tikzpicture}
    \caption{Separating the loop.}
  \end{subfigure}
  \caption{Loop snipping is strictly length-shortening. The upper loop
    in (b) is trivial and discarded. Because moving from (b) to (a)
    is knot composition with a trivial knot, moving from (a) to (b)
    does not change the knot type. 
    Vertex \ttt{v} may be \ttt{w}; vertex
    \ttt{t} may be \ttt{u} if \ttt{t} is not terminal; and
    \(\ttt{x}^*\) may be \(\ttt{y}^*\). Neither \(\ttt{x}^*\) nor
    \(\ttt{y}^*\) is required to exist.}
  \label{\pendingLabel}
\end{figure}

\section{Geometric Effects of Grammar Rules}
The grammar and the geometry are equivalent. We prove a few little lemmas to show this.

\begin{lemma}\label{L:oneway}
  Let \(K\) be a \(3\)-page bridge embedding. A bridge \ttt{A} that contains vertex
  \(\ttt{a}^{\circ}\) cannot be used for avoiding the underpass incident to
  \ottt{a}.
\end{lemma}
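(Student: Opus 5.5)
The argument will be by contradiction, combining the geometry of \Figure{avoid} with the structure of the bridge-looping word \(\ttt{A}'\). Suppose bridge \ttt{A} contains the vertex \(\ttt{a}^{\circ}\), that is, \ttt{A} crosses the binding equator at a point lying on the underpass \ottt{ab} incident to \ottt{a}. Suppose also that we try to avoid that underpass by substituting \(\ttt{A}'\) for some occurrence of \(\ttt{a}^{\circ}\) in a bridge word. This is exactly the situation the rule in \Table{rules} excludes through its requirement \(\ttt{a}^{\circ}\notin\ttt{A}\). The lemma says that this requirement is forced by the geometry and is not just a convention.

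\textbf{Step 1.} I will read off \(\ttt{A}'\) from the remark after \Definition{bridge-loop}. It is a trivial loop running parallel to \ttt{A} on both sides, so it contains a copy of every letter of \ttt{A}, including \(\ttt{a}^{\circ}\) (coming from \ttt{W}), together with the reversed fragment \(\overleftarrow{\ttt{W}}\).

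\textbf{Step 2.} I will perform the splice as described there: split \(\ttt{xa}^{\circ}\ttt{y}\) into the arcs \ttt{xa} and \ttt{ay}, then insert \(\ttt{A}'\) with its end letters \(\ttt{a}^{\circ}\) omitted. Because \(\ttt{a}^{\circ}\) occurs in \ttt{W}, the inserted word still contains the letter \(\ttt{a}^{\circ}\), which is not one of the omitted end letters. So the resulting bridge word still crosses the equator on the underpass \ottt{ab}.

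\textbf{Step 3.} I will check that no normalization or loop snipping can remove this occurrence. Normalization only cancels adjacent repeated letters or trivial end arcs, and the interior letter \(\ttt{a}^{\circ}\) of \(\ttt{W}\) has neighbours taken from \ttt{A} itself. Loop snipping requires an empty loop, and that fails because \ttt{A} itself crosses the equator at \(\ttt{a}^{\circ}\). Geometrically, as in \Figure{avoid}, the loop encircling \ttt{A} must cross the binding wherever \ttt{A} does. If \ttt{A} passes over \ottt{ab}, the loop passes over \ottt{ab} as well, so composing with it reintroduces a crossing over the very underpass we wanted to avoid.

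\textbf{Step 4.} The count gives the conclusion. The occurrences of \(\ttt{a}^{\circ}\) do not decrease; the copies in \(\ttt{W}\) and \(\overleftarrow{\ttt{W}}\) add two where one was removed. The move therefore never yields a configuration in which \(\ttt{a}^{\circ}\) is absent from all bridge words, and underpass closure is never enabled.

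I expect Step 3 to be the main obstacle: making rigorous that the surviving \(\ttt{a}^{\circ}\) cannot cancel against a neighbour once \(\ttt{A}'\) is spliced in. I would handle it with a parity argument on the number of occurrences of \(\ttt{a}^{\circ}\). Each normalization removes occurrences in pairs, and the splice changes the count from odd to odd (it removes one and adds two copies together with its reflection). So at least one occurrence must remain.
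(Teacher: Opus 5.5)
Your Steps 1--2, together with the geometric remark in Step 3 that a loop encircling \ttt{A} meets the binding wherever \ttt{A} does, are the whole of the paper's proof. Since \(\ttt{a}^{\circ}\) occurs in \ttt{A}, it also occurs in \(\ttt{A}'\), so pushing a crossing around \ttt{A} leaves it on the underpass at \ottt{a}. The paper stops there. It makes no attempt to show that later normalization cannot remove the surviving occurrence, and up to that point your proposal matches it.

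The parity count in Step 4, which you add to settle that extra point, is wrong and should not be relied on. There are three separate problems.
\begin{enumerate}
\item The arithmetic is off. The splice deletes one occurrence of \(\ttt{a}^{\circ}\) and inserts \(2m\), where \(m\geq 1\) is the number of occurrences in \ttt{W}. The letters \(\ttt{c}^{\circ},\ttt{c}',\ttt{a}'\) are different vertices from \(\ttt{a}^{\circ}\). So the count changes by \(2m-1\), which is odd: parity flips, and odd goes to even, not to odd.
\item Nothing makes the count odd beforehand. An inserted fragment contains each far-end cap letter an odd number of times. After an earlier avoidance performed via \ttt{A} or \ttt{B} from their other ends, one of those cap letters is \(\ttt{a}^{\circ}\), so its parity can already have changed.
\item Only \(\ttt{aa}\mapsto\emptyset\) deletes letters in pairs. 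The end rules \(\pm\ottt{a}\ttt{a}\mapsto\mp\ottt{a}\) and \(\ttt{a}\ottt{a}\mapsto\ottt{a}\) delete a single letter that is binding-adjacent to a bridge end; for example, \ttt{+501400} \(\mapsto\) \ttt{+50140} in the trefoil appendix. \(\ttt{a}^{\circ}\) is binding-adjacent to \ottt{a} and \ottt{b}, which are ends of \ttt{A} and \ttt{B}, so it is exactly this kind of letter.
\end{enumerate}
So parity is not an invariant of these moves and cannot certify that an occurrence of \(\ttt{a}^{\circ}\) survives. Likewise, your Step 3 remarks about the neighbours of \(\ttt{a}^{\circ}\) inside \ttt{W}, and about loop snipping, do not exclude cascading cancellations against letters of the host bridge on either side of the splice. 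Either stop where the paper stops, or replace Step 4 with an argument that actually controls those cancellations; as written, that step fails.
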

\begin{proof}
  The bridge-looping word around \ttt{A} must also include \(\ttt{a}^{\circ}\),
  so pushing crossings along \ttt{A} does not avoid the underpass incident to
  \ottt{a}.
\end{proof}

\begin{lemma}\label{L:twoways}
  Let \(K\) be a \(3\)-page bridge embedding.
  If \ottt{ab} exists, \ttt{A} contains \(\ttt{a}^{\circ}\), and
  \ttt{B} contains \(\ttt{b}^{\circ}\), then crossings cannot be removed from
  \ottt{ab} by the underpass avoidance move.
\end{lemma}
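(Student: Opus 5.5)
The plan is to reduce \Lemma{twoways} to two applications of \Lemma{oneway}, one for each bridge leaving the underpass. The first step is to fix what an underpass avoidance move on \ottt{ab} can be. By \Table{rules}, the move replaces an occurrence of \(\ttt{a}^{\circ}\) (equivalently \(\ttt{b}^{\circ}\), since these name the same vertex when \ottt{ab} exists) by a bridge-looping word. By \Definition{bridge-loop}, that word is either \(\ttt{A}'\) or \(\ttt{B}'\), because the loop must travel around one of the two bridges incident to the ends of \ottt{ab}. No other bridge touches \ottt{a} or \ottt{b}, and a loop around any other bridge does not pass the end of the underpass. So the move must be performed ``via \ttt{A}'' or ``via \ttt{B}'', and it suffices to rule out each of these.

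For the case via \ttt{A}, apply \Lemma{oneway} directly. Since \ttt{A} contains \(\ttt{a}^{\circ}\), the looping word \(\ttt{A}'\) contains \(\ttt{a}^{\circ}\) as well; this is visible in the explicit characterizations \(\pm\ttt{a}^{\circ}\ttt{Wc}^{\circ}\ttt{c}'\overleftarrow{\ttt{W}}\ttt{a}'\ttt{a}^{\circ}\) and their variants, where \ttt{W} contains \(\ttt{a}^{\circ}\) and so does \(\overleftarrow{\ttt{W}}\). Substituting \(\ttt{A}'\) for an occurrence of \(\ttt{a}^{\circ}\) therefore reintroduces \(\ttt{a}^{\circ}\), indeed at least two new occurrences, so the number of crossings over \ottt{ab} does not drop. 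This is also exactly the violation of the side condition \(\ttt{a}^{\circ}\notin\ttt{A}\) in \Table{rules}, so the move is not even permitted. The case via \ttt{B} is symmetric: \ttt{B} contains \(\ttt{b}^{\circ}=\ttt{a}^{\circ}\), so \Lemma{oneway} applied at the end \ottt{b} shows \(\ttt{B}'\) cannot avoid the underpass.

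The main obstacle is making sure the crossing count really cannot drop after the eager rules are applied, since normalization and loop snipping could conceivably cancel the reintroduced letters. For this I would use the observation in the remark after \Definition{bridge-loop}: a loop around a bridge meeting the underpass crosses the equator at \(\ttt{a}^{\circ}\) an even number of times with the same page pattern as the original bridge. It thus contributes at least as many irreducible occurrences as it removes, and the normalization rules only delete adjacent equal pairs that were already present before the substitution. If that accounting is delicate, I would fall back on the syntactic statement alone: the rule's precondition fails for both \(\ttt{A}\) and \(\ttt{B}\), so no underpass avoidance move applies to \ottt{ab} at all.
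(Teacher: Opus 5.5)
Your argument is correct and is essentially the paper's own proof, which just applies \Lemma{oneway} at each end of \ottt{ab}; you do the same by treating avoidance via \ttt{A} and via \ttt{B} separately, and you note that the side condition \(\ttt{a}^{\circ}\notin\ttt{A}\) fails for both bridges. Drop your final paragraph: it is not needed for a statement about the move itself, and its claim that normalization only deletes pairs already present before substitution is false (in the trefoil appendix, normalization removes letters created by the splice), so rely on the syntactic fallback instead.
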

\begin{proof}
  Apply \Lemma{oneway} in each direction from \ottt{ab}.
\end{proof}

\begin{lemma}\label{L:minimal}
  If \(K\) is a \(3\)-page bridge embedding with no grammatical moves available, then \(K\)
  is a locally minimal bridge presentation.
\end{lemma}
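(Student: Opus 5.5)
The plan is to argue by contraposition: assume \(K\) is a \(3\)-page bridge embedding that is not locally minimal, and produce a grammatical move that is still available. Here ``locally minimal'' means that no bridge-decreasing move is possible in the neighbourhood of the current presentation. Concretely, there is no destabilization, i.e.\ no pair consisting of a bridge and an underpass that cobound a disc meeting the bridge sphere \(\Sigma\) in a single arc. By the Lemma that a \(3\)-page bridge embedding is a bridge presentation, the bridges are the components of \(K\cap B^+\) and the underpasses those of \(K\cap B^-\). So the task is to translate the existence of a cancelling disc into the applicability of one of the rules in \Table{rules}.

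Suppose first that some underpass \ottt{ab} meets a cancelling disc \(D\) together with an adjacent bridge, say \ttt{A}. I will read off from \(D\) the word of \ttt{A}.
\begin{itemize}
\item If \(D\) meets no other strand, then the portion of \(\partial D\) in \(B^+\) projects onto the pages \(S\cup N\) without crossing any bridge. In that case I expect the word of \ttt{A} to be reducible to a single arc by normalization and loop snipping. The loop-snipping condition in \Definition{loop-snipping} is exactly the absence of equator crossings between the two snipped vertices.
\item If the reduced word is trivial, then neither \(\ttt{a}^{\circ}\) nor \(\ttt{b}^{\circ}\) occurs in any bridge word, so underpass closure applies and merges \ttt{A} with \ttt{B}.
\item If \(D\) is crossed by other strands, each such strand crosses \ottt{ab} in the projection, so \(\ttt{a}^{\circ}\) appears in some bridge \ttt{C}. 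By \Lemma{twoways}, not both adjacent bridges contain the relevant vertex, since the cancelling disc shows \ttt{A} is free of \(\ttt{a}^{\circ}\). Hence underpass avoidance via \ttt{A}, with condition \(\ttt{a}^{\circ}\notin\ttt{A}\), is available.
\end{itemize}
Each case contradicts the hypothesis that no move is available.

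It remains to check the eagerly applied rules, namely normalization and quotient. If either applies, a move is available by definition, so under the hypothesis every word is normalized and no adjacent non-terminal vertices can be merged. This is used above to identify the geometric arcs with the letters of the words, via \Lemma{quotient}.

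The main obstacle is the second step: showing that an abstract cancelling disc, which may be complicated in the complement of \(K\), can be isotoped so that its boundary arc in \(B^+\) is visible combinatorially in the three pages. Only then does the absence of crossings with other arcs translate into the letter conditions of the rules. I would first try an innermost-arc argument on \(D\cap\Sigma\) and on \(D\cap(\text{pages})\) to put \(D\) in standard position, removing inessential intersections by loop snipping. I would also use the fact that bridges in a trivial tangle are simultaneously parallel into \(\Sigma\), so that the projection of \ttt{A} onto \(\Sigma\) is an embedded arc determined by its word. A gap may remain: such a disc might only exist after moves that first increase complexity. That case would have to be absorbed into the definition of local minimality, as ``no move decreases bridge number without passing through a stabilization''.
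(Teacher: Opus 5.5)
Reducing local minimality to the absence of a cancelling pair between a bridge and an adjacent underpass is the right target. The gap is in the step that carries all the content: turning an arbitrary cancelling pair into one of the letter conditions of \Table{rules}. You leave that step open, and your case analysis only works for one particular pair of discs.

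In the \(3\)-page picture, \ttt{A} has a canonical bridge disc swept radially between pages \(S\cup N\) and \(\Sigma\). Its arc in \(\Sigma\) crosses the equator exactly at the letters of \ttt{A}. Likewise \(\ottt{ab}\) has the canonical disc whose arc is the equatorial segment from \(\ottt{a}\) to \(\ottt{b}\). For this pair, ``meeting only at \(\ottt{a}\)'' is literally \(\ttt{a}^{\circ}\notin\ttt{A}\), which is why underpass avoidance detects a destabilization.

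Bridge discs of a trivial tangle are not unique rel the other strands, however. The arc of a bridge disc for \ttt{A} or for \(\ottt{ab}\) can wind around the punctures of other bridges. Nothing in your argument excludes a presentation in which every cancelling pair uses such a non-canonical disc, while \(\ttt{a}^{\circ}\in\ttt{A}\) and \(\ttt{b}^{\circ}\in\ttt{B}\) for every underpass. An innermost-curve argument removes inessential intersections of \(D\) with \(\Sigma\). It does not change the isotopy class of the arc \(D\cap\Sigma\) rel the punctures, which is what would need normalizing.

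This breaks each of your bullets:
\begin{itemize}
\item Third bullet: the inference that the cancelling disc shows \(\ttt{a}^{\circ}\notin\ttt{A}\) is unjustified. \Lemma{twoways} cannot supply it, because it goes in the grammatical direction (both bridges contain their crossing vertex, so avoidance removes nothing). You need the topological converse, which is essentially the statement being proved.
\item First bullet: a bridge with a cancelling partner need not reduce to one arc. Its radial arc may wind around other punctures yet miss the segment from \(\ottt{a}\) to \(\ottt{b}\); then the available move is avoidance, not snipping.
\item Second bullet: a one-arc \ttt{A} says nothing about other bridges \ttt{C} passing through \(\ttt{a}^{\circ}\), so closure does not follow.
\end{itemize}

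What is missing is a normal-form result: if some cancelling pair exists, then one exists, with \(\Sigma\) and the punctures held fixed, whose arcs in \(\Sigma\) are the canonical ones up to the eager moves. The paper does not argue through discs at all. It covers this step, in outline, in three parts:
\begin{itemize}
\item it decomposes any crossing-changing isotopy within bridge position into loop moves via Tawn's cut complex;
\item it argues by pigeonhole over the bridges, using \Lemma{oneway}, that a chain of forced detours cannot end in an avoidance, so some crossing would have to be removable by loop snipping;
\item it cites Scharlemann--Tomova to hold the underpass sphere fixed.
\end{itemize}
Nothing in your proposal plays these roles.

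Your fallback does not work either. Isotopies that add crossings while keeping \(\Sigma\) a bridge sphere with the same number of bridges do not change whether the presentation is perturbed. So a cancelling pair that appears only after such isotopies still shows that \(K\) is not locally minimal in the sense of Ozawa--Takao. Declaring such \(K\) locally minimal would reduce the lemma to ``no move is available.'' It would also cut the lemma off from Otal's theorem, on which the unknot corollary depends.
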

\begin{proof}
  Suppose there are \(n\) underpasses in \(K\) and no moves are available. Then from
  \Lemma{twoways}, no crossings can be removed by an underpass avoidance rule and, because
  there are no grammatical moves available, no eager grammar rules will change the bridge words.

  From Tawn \cite{TawnCutComplex,TawnErratum}, every ambient isotopy that does not introduce a
  new bridge but changes crossings can be constructed as the composition of isotopies around a
  path between two vertices. For any single such isotopy in this complex, the difference between
  the diagram before the isotopy and after will form a loop. That is, if the diagram
  has a path \(p_0\) that exists before the isotopy but not after and a path \(p_1\)
  that exists after the isotopy but not before, then the isotopy can be chosen
  such that \(p_0 \cup p_1\) is a loop in pages \(S \cup N\).
  
  Suppose, for contradiction, that we can avoid a particular crossing 
  that obstructs a reduction in the number of bridges. We have \(n\) underpasses to avoid, and repeated application
  of \Lemma{oneway} therefore produces a chain of required bridge detours. Since there are
  only \(n\) bridges and no underpass can be closed during this chain, the chain must eventually
  repeat a bridge. Such repetition cannot complete an avoidance. That is, no
  loop that adds crossings allows an underpass removal. Therefore,
  there must be a crossing that can be relocated or removed without avoiding an
  underpass. However, this would imply the existence of a loop-snipping move,
  contrary to the assumption that no eager moves are available. By contradiction,
  no crossings can be relocated or removed in a way that makes an underpass
  disjoint from either of its adjacent bridges.

  Moving the vertices does not help. \(K\) is a bridge presentation, and Scharlemann and
  Tomova \cite[Lemma 3.1]{cancelingPairs} proved that one of the spheres in a bridge splitting may be
  held fixed during bridge perturbation and stabilization without preventing the discovery of
  canceling disc pairs. We may therefore hold the underpass sphere constant without affecting
  the available topological moves.

  Thus, if no grammatical move is available, then no sequence of geometric moves can change
  the number of bridges without either leaving bridge position or introducing a new bridge.
  The number of bridges is locally minimal.
\end{proof}

\section{Containing the Complexity}\label{S:complexity}
In this section, we show the size and time complexities of different elements of the process
in order to reach a worst-case complexity. We note, and will show, that a naive application without
any word-shortening moves could result in a system that has an exponential length relative to the number
of initial crossings. Despite this, by tracking arcs with multiplicity instead of sequences and
making use of the quotient move, we may contain these possibly \(2^{O(n)}\) crossings in \(O(n)\) space for each
of the \(n\) initial underpasses.

In this section, we show only the gross complexities. Implementation details such as indexing,
algorithms for bit arithmetic, and the specific methods chosen for tasks may result in
sharper or weaker bounds, which we will note where relevant.

On space complexity, we have a base complexity
\begin{align*}
  &O(n) &&\times & &O(n) &&\times & &O(n) & &=O(n^3)\text{.}\\
  &\text{Bridges} & &\ & &\text{Arcs} & &\ & &\text{Bits per arc count}
\end{align*}

This bound is sharp for the representation as arcs with counts if the maximum number of
crossings is \(2^{O(n)}\). However, some supplementary structures may be required during
calculations and some other proof may show a better bound on the number of crossings.
None of the auxiliary structures will make use of the expanded sentence, so
we will require space polynomial in the space required for the basic objects,
yielding a polynomial complexity overall.

On time complexity, we will show \(O(poly(n))\) to reduce a diagram with \(n\) initial crossings.
In a concrete implementation, the coefficient and exponent of the complexity will depend on the
complexities of some of the inner steps (e.g., finding loop cutting moves and performing
the normalization of internal repetition of letters). The implementation choices involve
a trade-off between the efficiency of data storage and the efficiency of data
indexing. For example, we may straightforwardly align arc bundles across the equator
in \(O(n)\) time if the time to access the counts of parallel edges is \(O(1)\) per edge,
but storing the counts in this way naively requires \(O(n^2)\) vertex positions (indexed
first by the position on the binding equator and second by the position of the other end on
the binding circle) despite only \(O(n)\) distinct chords being present in any page
(see \Lemma{rhssize}).

Because each underpass closure reduces the number of bridges by \(1\) until local minimality,
performing the bridge avoidance moves in aggregate by summing the counts of indistinguishable
arcs (rather than literally modifying the word) means there are only \(O(n)\) large steps.
Within each of these large steps, there are \(O(n)\) words to consider with \(O(n)\) potential
moves on each, and the addition and
multiplication of potentially \(O(n)\)-bit integers for each of \(O(n)\) pairings. Because the
arithmetic is on \(O(n)\)-bit words, we assume a conservative bound of \(O(n^2)\) time,
which gives a base time complexity of
\begin{align*}
  &O(n) & &\times & &O(n^2) & &\times & &O(n) & &\times & &O(n^2) & &=O(n^6)\text{.}\\
  &\text{Steps} & &\ & &\text{Simplifications} & &\ & &\text{Arcs} & &\ & &\text{Bit arithmetic}
\end{align*}
\begin{remark}
  As noted in \Section{construction}, it may take time \(O(n^2)\) to construct the \(3\)-page
  bridge embedding from a knot diagram, but the construction time is dominated by the calculation
  time, even when we later assume \(O(1)\) time for bit arithmetic in special cases.
\end{remark}

Despite this optimistic bound, normalization requires \(O(poly(n))\) time. Structurally,
this seems like it is somewhere around \(O(n^3)\) each time it occurs (see \Lemma{zipper}),
but we do not prove a better bound than polynomial.

\begin{lemma}
  At most \(n\) underpass closures are needed to reduce a diagram with an initial \(n\) bridges
  to a locally minimal bridge position.
\end{lemma}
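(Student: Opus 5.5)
We will prove the lemma by a monotonicity argument on the number of bridges (equivalently, underpasses). In a \(3\)-page bridge embedding every underpass \(\ottt{ab}\) joins exactly two bridge ends, and the bridges and underpasses alternate around the single loop traced by the knot. So the numbers of bridges and underpasses coincide, and a diagram with \(n\) initial bridges starts with exactly \(n\) underpasses. The plan is to show three things: each underpass closure removes exactly one underpass and reduces the bridge count by exactly one; no other rule increases either count; and the count is bounded below by \(1\). Together these bound the number of closures by \(n-1 \le n\).

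The first step is to check the closure rule itself. The rule \(\ttt{A},\ttt{B} \mapsto \ttt{A\#B}\) (\Definition{concatenation}) fires only when some \(\ottt{ab}\) exists and no bridge word contains \(\ttt{a}^{\circ}\) or \(\ttt{b}^{\circ}\). Geometrically, the underpass then meets no bridge crossing, so it can be pushed from page \(U\) into \(S\cup N\) and absorbed into the concatenated bridge. The terminal vertices \(\ottt{a}\) and \(\ottt{b}\) are removed from the circle word, and the two bridge words \(\ttt{A}\) and \(\ttt{B}\) become the single word \(\ttt{A\#B}\). This lowers both counts by exactly one.

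The second step is to run through \Table{rules} and check that no other rule increases the counts. Normalization, loop snipping and quotient only rewrite letters inside a bridge word, or identify non-terminal vertices, which is reversible by \Lemma{quotient}. None of them creates a terminal vertex or a new bridge word. Underpass avoidance replaces \(\ttt{a}^{\circ}\) by the bridge-looping word \(\ttt{A}'\), which by \Definition{bridge-loop} contains no terminal vertices. The paper also notes that no rule introduces new symbols or revives eliminated ones, so no new underpass \(\ottt{cd}\) can ever appear. The bridge count is therefore non-increasing under every rule and strictly decreasing under closure.

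The last step is the termination bound. A knot in bridge position has at least one bridge, and a closure when only one bridge remains would leave no underpass at all. So at most \(n-1\) closures can occur before no closure is possible, which is at most \(n\). Once no closure or other move is available, \Lemma{minimal} shows the resulting presentation is locally minimal. The main obstacle is the second step, specifically confirming that underpass avoidance cannot create a new terminal vertex. My first attempt will rely on the remark after \Definition{bridge-loop}: the looping word lives on \(S\cup N\) and its endpoints \(\ttt{a}^{\circ}\) are omitted upon splicing. From that I will argue that the circle word's set of terminal letters is unchanged by avoidance.
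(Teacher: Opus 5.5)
Your proposal is correct and follows the paper's own argument. The paper's proof is the one-line observation that each closure lowers the bridge count by one and no move creates a new underpass; your first two steps say exactly this, and your rule-by-rule check and the floor of one bridge (giving $n-1$) fill in details the paper leaves implicit.
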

\begin{proof}
  Each closure reduces the number of bridges by \(1\) and no move creates a new underpass.
\end{proof}

Straight-line programs (SLPs) are a fundamental concept of theoretical computer science.
An SLP is a context-free grammar that generates exactly one string. An SLP of size
\(O(n)\) will terminate in a string of length at most \(2^{O(n)}\). See Lohrey
\cite{LohreySLP} for a survey. We use only this result from the theory of SLPs, not any
optimization algorithms or other results.

\begin{lemma}\label{L:slpsize}
  Any sequence of metagrammar moves used here can generate a straight-line program that produces
  one string, the characterization of the resulting \(3\)-page bridge embedding, with
  length at most \(2^{O(n)}\).
\end{lemma}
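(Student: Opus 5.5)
We will prove the statement by building the straight-line program alongside the sequence of moves, with one nonterminal per bridge word. The invariant is that the current sentence is always produced by an SLP whose size is polynomial in \(n\), at most \(O(n^2)\) rules. We then invoke the standard fact recalled from Lohrey \cite{LohreySLP}: an SLP of size \(s\) produces a string of length at most \(2^{O(s)}\). What we actually need is sharper, namely that the \emph{depth} of the SLP, the number of doubling steps, is \(O(n)\). So we will bound depth, not only size.

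\textbf{Initial SLP.} By \Lemma{firstLength}, the initial sentence has at most \(4n\) letters plus the circle word. We introduce a nonterminal \(X_{\ttt{A}}\) for each bridge word \(\ttt{A}\), with a rule producing that word literally, and a start rule concatenating the \(X_{\ttt{A}}\) with the circle word. This SLP has size \(O(n)\) and depth \(O(1)\).

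\textbf{Effect of each move.} We examine the four move types in turn.

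\emph{Underpass avoidance.} This replaces \(\ttt{a}^{\circ}\) by \(\ttt{A}'\). By the remark after \Definition{bridge-loop}, \(\ttt{A}'\) is built from \(\ttt{A}\)'s interior \(\ttt{W}\), its reversal \(\overleftarrow{\ttt{W}}\), and \(O(1)\) extra letters. We therefore add a rule \(X'_{\ttt{A}} \to (\text{letters})\,X_{\ttt{A}}\,(\text{letters})\,\overleftarrow{X_{\ttt{A}}}\,(\text{letters})\). To represent the reversal, we keep a mirrored nonterminal \(\overleftarrow{X}\) for every \(X\), which at most doubles the size. Each occurrence of \(\ttt{a}^{\circ}\) in any bridge word then becomes a reference to \(X'_{\ttt{A}}\). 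To avoid editing every occurrence separately, we treat \(\ttt{a}^{\circ}\) itself as a nonterminal whose rule is updated.

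\emph{Underpass closure.} This concatenates \(X_{\ttt{A}}X_{\ttt{B}}\), with at most one glue letter by \Definition{concatenation}, and adds one rule.

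\emph{Normalization, loop snipping and quotient.} These only delete or rename letters, so they are realized by restricting or renaming terminals in the existing rules. This never increases the length of the produced string. Since none of the rules introduces new symbols, the terminal alphabet stays of size \(O(n)\).

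\textbf{Length bound.} A single avoidance step turns a bridge word of length \(k\) into one of length at most \(2k+1\). The main obstacle is that avoidances can be iterated. A word for \(\ttt{A}\) may already contain expansions of other bridges, and naively that gives a tower rather than a single exponential. We would handle this with the observation from the beginning of \Section{complexity}: avoidances are performed in aggregate between consecutive underpass closures. Within one such phase, each underpass is avoided via a fixed bridge, and by \Lemma{oneway} the chain of detours cannot cycle. Hence the dependency graph among the nonterminals \(X'_{\ttt{A}}\) is acyclic, of height at most the number of bridges, \(O(n)\).

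Let \(L_t\) denote the maximum bridge-word length after \(t\) levels. Then \(L_{t+1}\le c\,L_t\) for a constant \(c\), since a substitution at most doubles the length plus a constant, and occurrences multiply rather than nest. Over the \(O(n)\) levels available, given that there are at most \(n\) closures and each closure only concatenates two words, this gives \(L\le c^{O(n)}\cdot O(n) = 2^{O(n)}\).

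The delicate point we expect to work hardest on is showing that depth adds across phases rather than multiplies. Our plan is to charge each doubling to a distinct underpass, which is eliminated permanently once closed. Because no move revives eliminated symbols, there are only \(O(n)\) doublings in total along any root-to-leaf path of the SLP. The produced string then has length \(2^{O(n)}\), and it is the characterization of the resulting \(3\)-page bridge embedding because each rule faithfully implements the corresponding move of \Table{rules}.
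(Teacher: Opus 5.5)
You build the grammar essentially as the paper does: one nonterminal per bridge word, \(\ttt{a}^{\circ}\) promoted to a nonterminal when its avoidance rule fires, and closure as a concatenation rule. The length bound, however, has a genuine gap. The recurrence \(L_{t+1}\le c\,L_t\) is false. The factor \(2k+1\) bounds \(|\ttt{A}'|\) in terms of \(|\ttt{A}|\); it does not bound the host word after substitution. Every occurrence of \(\ttt{a}^{\circ}\) in a bridge \(\ttt{C}\) is replaced by about \(2|\ttt{A}|\) letters. Earlier avoidances can leave exponentially many such occurrences in \(\ttt{C}\), since each looped word contains its bridge's interior twice. So \(|\ttt{C}|\) can grow to roughly \(2|\ttt{A}|\,|\ttt{C}|\). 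The honest per-level recurrence is \(L_{t+1}\le c\,L_t^2\), and over \(O(n)\) levels that yields only \(2^{2^{O(n)}}\). Your fallback, charging doublings to distinct underpasses along a root-to-leaf path, controls only the avoidance rules. The number of leaves is governed by every branching rule on the path, including closure concatenations and whatever else makes up your unexplained \(O(n^2)\) rules. You never show that those number \(O(n)\) on any path.

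The missing idea is the paper's: all bridges live in a single shared grammar, and each context-free move adds only \(O(1)\) rules to it. That means one rule \(\ttt{a}^{\circ}\to X'_{\ttt{A}}\) per avoided underpass (at most \(n\)), one concatenation per closure (at most \(n\)), and one renaming per quotient (\(O(n)\) labels), on top of an \(O(n)\) initial sentence. So the whole SLP has size \(O(n)\), and the standard \(2^{O(\text{size})}\) bound applies to every word simultaneously. Log-lengths are bounded by the shared grammar's size rather than compounding word by word, and no separate depth analysis is needed; your \(O(n^2)\) estimate is what forced the detour.

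Separately, normalization \(\ttt{aa}\mapsto\emptyset\) and loop snipping cannot be realized by ``restricting or renaming terminals in the existing rules.'' The deleted pairs can straddle nonterminal boundaries. Different occurrences of the same nonterminal (e.g.\ \(X'_{\ttt{A}}\)) need different edits, and zipper cascades propagate across them. The paper instead leaves these non-context-free, length-reducing moves out of the grammar. It builds the grammar after the fact only as an upper bound on the length of the actual sentence. You need that domination argument, or an explicit count of the new rules such edits would require, for your construction to be sound.
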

\begin{proof}
  We use the construction from \Section{construction} to characterize a diagram with \(n\)
  crossings in \(O(n)\) distinct symbols. All terminal symbols will be from this set, although
  some symbols that appear in the initial encoding may not appear in the output. The metagrammar
  (see \Table{rules}) has a fixed size independent of \(n\), which may be applied to each
  of the \(O(n)\) vertex labels.
  
  At each underpass avoidance move, we use the grammar rule \(\ttt{a}^{\circ} \mapsto \ttt{A}'\)
  for some \(\ttt{a}^{\circ}\) in the initial set of symbols. Every other rule is
  length-reducing or length-neutral without using a new symbol on either side of the rule. This gives
  us \(O(n)\) grammar size for changes and \(O(n)\) grammar size for the initial diagram,
  which is an \(O(n)\) grammar for the whole system. We need \Lemma{rhssize} here to
  be sure that only \(O(n)\) objects (whether bridges and their bridge-looping words or arcs)
  need to appear on the right-hand side at any stage.
  
  The metagrammar is not itself a context-free grammar, relying on context in some places and
  conditionality in others. However, we are interested in a worst-case analysis of the total
  output size, so the conditionality is only a problem because it would, if unaddressed,
  put what may be a terminal symbol on the left-hand-side of the production rule.
  To avoid this, we never formally create an uninvoked rule, despite the metagrammar telling
  us how we \emph{could} construct it. In spirit, we construct a grammar after the
  fact to evaluate the potential size of the final sentence.
  
  \begin{itemize}
    \item Normalization is not context-free, but it is strictly length-reducing.
    \item Quotient is conditional but context-free because the left-hand side is eliminated everywhere
      when invoked, so the left-hand side is a non-terminal symbol.
    \item Underpass closure is conditional but context-free and length-reducing.
    \item Underpass avoidance is conditional but context-free.
    \item Loop snipping is conditional and not context-free, but strictly length-reducing.
  \end{itemize}

  The constructed grammar is a directed acyclic graph because we construct the rules exactly
  when we will apply them, which avoids any forward-reference to rules and thus any cycles of rule
  application. We can index these rules, if desired, by writing a new rule and using the count
  of underpasses already closed as an index for any replacements if explicit grammar rules
  are desired. Every rule that can increase the sentence length is context-free.
  This is an SLP of size \(O(n)\).

  From \Lemma{minimal}, after all moves, the outcome is a sentence characterizing
  a \(3\)-page bridge embedding of a diagram as a locally minimal bridge presentation.
\end{proof}
\begin{remark}
  Because we construct our grammar retroactively, time-complexity results from the
  theory of SLPs cannot be directly applied, but we can analyze the time-complexity separately.
  We do our time-cost accounting in arc form after doing this space-cost accounting
  in grammar form by appeal to an SLP.
\end{remark}

\begin{lemma}\label{L:rhssize}
  At most \(O(n)\) arcs are needed across the entire sentence of an initially \(n\)-crossing knot
  diagram.
\end{lemma}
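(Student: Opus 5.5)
We interpret ``arc'' as a \emph{distinct} arc: a chord of page \(S\) or \(N\) determined by an unordered pair of binding vertices and a page, counted once regardless of multiplicity. The claim is then that the number of such distinct chords stays \(O(n)\) through any sequence of metagrammar moves. The bound should come from planarity together with the quotient rule, not from tracking the words themselves.

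\textbf{Bounding the vertices.} First we bound the vertices on the binding equator. By \Lemma{firstLength}, the initial embedding has at most \(4n\) letters, so the terminal vertices number \(O(n)\). Between two consecutive terminal vertices, the non-terminal vertices can be identified by the quotient rule, which is eagerly applied. By \Lemma{quotient} this identification is reversible, so it costs no information. Hence at most one non-terminal vertex survives in each gap, and the binding carries \(O(n)\) vertices. This uses the fact stated before \Lemma{slpsize} that no rule introduces new symbols or revives eliminated ones. In particular, the bridge-looping word \(\ttt{A}'\) uses only the letters \(\ttt{a}^{\circ},\ttt{a}',\ttt{c}^{\circ},\ttt{c}'\) and letters already in \ttt{A}. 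So underpass avoidance adds arcs between existing (quotiented) vertices only.

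\textbf{Bounding the distinct chords.} Within one page (a disc), the arcs are pairwise non-crossing. The distinct chords therefore form an outerplanar graph on the \(V=O(n)\) binding vertices, and it has at most \(2V-3\) edges. Doing this for both pages \(S\) and \(N\), and counting the at most \(n\) arcs of page \(U\), gives \(O(n)\) distinct arcs in total. Parallel copies of one chord are stored as one arc with a multiplicity. Because of this, the possibly \(2^{O(n)}\) total length from \Lemma{slpsize} costs only the \(O(n)\)-bit counts, which matches the space accounting.

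\textbf{Main obstacle.} The hard step is justifying that the quotient may always be applied, even after avoidance moves create arcs \(v_1v_2\) between adjacent non-terminal vertices. \Lemma{quotient} explicitly excludes that case. I would handle it in two parts. Such an arc joining binding-adjacent vertices is a trivial internal arc, so normalization (\(\ttt{aa}\mapsto\emptyset\)) removes it before quotienting. Any remaining nesting obstruction is a forward reference to the later \Lemma{zipper}, which permits quotienting without that hypothesis while preserving the knot type. If I wanted to avoid that dependence, I would instead argue the outerplanar count directly on the unquotiented graph. The extra non-terminal vertices between two terminal ones support only chords nested in a chain, and a chain of nested chords collapses under loop snipping. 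This would bound the surviving distinct chords per gap by a constant.
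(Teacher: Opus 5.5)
Your proposal is correct and follows essentially the same route as the paper. You keep the number of binding vertices at \(O(n)\) via the quotient move, note that the distinct chords in each of pages \(S\) and \(N\) form an outerplanar graph on those vertices, and add the at most \(n\) underpass arcs in page \(U\). Your explicit handling of an arc joining the two vertices to be quotiented (via normalization and \Lemma{zipper}) is more careful than the paper's one-line appeal to \Lemma{quotient}, but the argument is the same.
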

\begin{proof}
  From the non-crossing condition and the initial number of vertices, this is, at worst, two
  maximal outerplanar graphs, one on page \(S\) and another on page \(N\). Each of these
  has at most \(4n-3\) arcs, so the sentence encodes at most \(8n-6\) arcs for bridges and
  \(n\) arcs for underpasses. From \Lemma{quotient}, we never need new vertices
  that remain beyond one quotient action, and the only distinct arcs that may be added,
  rather than just replacing an existing arc, are a single arc added during underpass
  avoidance, and all arcs must still fit within the \(9m - 6\) distinct arcs when
  \(m\) bridges remain.
\end{proof}

For the above proof, we treat the multiplicity of indistinguishable arcs as irrelevant.

\section{Sentence Perspective and Arc Perspective}
In the proof of \Theorem{main}, we use a formal grammar on words but mention that
this corresponds directly to a set of arcs. This grammar and its pieces are relatively
unknown, so in this section, we describe how to convert between the two styles, which
is also necessary for compressed output of final diagrams as edge-weighted graphs, and
how the two perspectives simplify or complicate different steps.

We make a set of granular definitions here to clarify the relation between the
two ways of considering the geometry of the \(3\)-page bridge embeddings. For all definitions,
assume that \(n\) is the number of bridges.

\begin{definition}[vertex label]
  There are \(2n\) endpoints of the \(n\) bridges, and a vertex label is a member of
  a set of \(2n\) distinct labels.
\end{definition}

We may use any labels that are distinct, and in figures we have used both numerals and
lower-case roman letters. The labels are atomic and have no inherent meaning, though
a convention such as ``labels are in order of knot traversal'' can be helpful for reading
diagrams visually. This is the convention we have followed here.

\begin{definition}[circle word]
  In the grammar, given a list of \(2n\) distinct vertex labels, a \emph{circle word}
  is a semicolon followed by an ordered list of these symbols. Using this word, each vertex
  label is associated to the natural number such that the \(i\)th label has value \(2i - 2\)
  for \(i = 1, 2, \ldots, 2n\).
\end{definition}

\begin{definition}[cyclic order, east, west]
  We associate half of the \(4n\) vertices to vertex labels in the circle word. These are
  the terminal vertices. In the grammar, this means they are either the first or last letter/label
  in a bridge word. In the embedding, this means they have exactly one edge incident to the
  vertex on page \(U\).

  The \emph{cyclic order} is given by the abelian group \((\mathbb{Z}_{4n}, +)\), with
  \(\sigma(x) = y\) meaning \(y \equiv x + 1 \mod 4n\) among all vertices and
  \(\sigma(x) = y\) meaning \(y \equiv x + 2 \mod 4n\) among all terminal vertices.

  Letting \(x_{E} = \sigma(x)\) and \(\sigma(x_{W}) = x\), we call \(x_{E}\) the vertex \emph{east}
  of \(x\) and call \(x_{W}\) the vertex \emph{west} of \(x\). 
\end{definition}

Because of \Lemma{quotient}, non-terminal vertex labels are interchangeable up to the
nearest terminal vertex moving west, and we use that label for them. That is, if \(x\) is a
label, we will have two vertices with that label, one terminal and one to the east of that
terminal vertex. In the grammar, we can directly identify between them because of the position
in a bridge word, so this does not cause ambiguity.

\begin{definition}[bridge word in grammar]
  A \emph{bridge word} begins with a \ttt{+} or \ttt{-} symbol and is followed by at least
  two vertex labels. The first and last vertex labels cannot be the same because this would form
  a loop beginning and ending at the same terminal vertex, which is not a bridge.
\end{definition}

Note that we ignore the case where there is exactly one bridge in the diagram because when
this happens, we declare the diagram trivial and finish.

Given the circle word, we know the arrangement of vertex labels, which we may treat as
either a linear order or as a cyclic order. The cyclic order is more convenient because it
makes every arc a chord and eliminates the irrelevant distinction between the two kinds of
non-interleaving pairs of arcs (\emph{nested} or \emph{unnested} in the linear order).

In the pages of the embedding, every bridge is a path along arcs in \(S \cup N\), and the
bridge word says exactly how these are embedded up to ambient isotopy.
Let us define an arc in our context.

\begin{definition}[arc]
  An \emph{arc} is an unordered pair of vertex positions together with a page (\(U\), \(S\), or \(N\)).
\end{definition}

We can immediately take care of underpasses, which we will not consider again until later. Note
that because the vertex pair is unordered, \((x, y, P) = (y, x, P)\) for any labels
\(x\) and \(y\) and page \(P\). We explicitly permit \((x, x, P)\) as an arc to be dealt with
by the normalization rules. (See \Lemma{zipper}.)

\begin{definition}[underpass embedding]
  An \emph{underpass} in the \(3\)-page bridge embedding from the viewpoint of arcs is an arc
  \((a, b, U)\), where \(a\) and \(b\) are terminal and \(a_{E} = b\) or \(b_{E} = a\) among
  terminal vertices.
\end{definition}

This says exactly what the grammar construction did: an underpass lives on page \(U\) and is an
arc between adjacent terminal vertices (ignoring intervening non-terminal vertices),
using the east/west meaning for adjacency. The direction is not well-defined in exactly one
case: when there is exactly one bridge. In that case, the diagram is trivial.

Now we decompose a bridge word and show its representation as arcs.

If the bridge word begins with \ttt{+} (resp., \ttt{-}), then the first named arc belongs in
page \(N\) (resp., \(S\)). After that, the next named arc belongs in the opposite page, switching
pages between \(S\) and \(N\) at each letter and using the next two letters as the arc ends.
For example, if the complete bridge word is \ttt{+abad}, then we have arc \((\ottt{a}, \ttt{b}, N)\),
arc \((\ttt{b}, \ttt{a}, S)\), and arc \((\ttt{a}, \ottt{d}, N)\) appearing
in that order. The order of the arcs in the geometry can be reconstructed, so in terms of arcs,
each bridge word encodes a multiset of arcs with their multiplicities. Note that the letter
\ttt{a} appears in the bridge word twice, once as a terminal vertex and once as the non-terminal
vertex immediately east of that terminal vertex.

For a bridge word of length \(k\), there are \(k - 1\) arcs encoded. Whether the non-terminal
vertices \ttt{b} and \ttt{a} imply a knot crossing can be determined from the underpass
arcs. If \ottt{a} is (resp., is not) on the west end of an underpass,
then the non-terminal vertex with label \ttt{a} does (resp., does not) indicate a knot crossing.
In the metagrammar, we write this as \(\ttt{a}^\circ\) (crossing) or \(\ttt{a}'\) (non-crossing),
and we can know immediately from the circle word what the symbols of those two vertices will be.
This geometric interpretation also makes immediately clear why we can reverse the direction in
which a bridge is written and why it might or might not require switching the sign, depending on
only the parity of the length of the word. (Odd length requires switching; even length does not.)

The definition of a bridge-looping word and how to use it in underpass avoidance can seem a
bit arbitrary in the grammar, although every outcome is of the same length. The case is
straightforward in arc form.

\begin{definition}[bridge-looping word]\label{D:bridge-loop-arcs}
  As a set of arcs, given a bridge \ttt{A} with ends \(\ottt{a}\) and
  \(\ottt{b}\) with initial arc \((\ottt{a}, \ttt{r}, X)\),
  terminal arc \((\ottt{b}, \ttt{s}, Y)\), and multiset \(M\) of non-terminal arcs,
  the bridge-looping set of arcs is found by the following steps.
  \begin{enumerate}
    \item Begin with \(M\).
    \item Double all of the multiplicities of \(M\).
    \item Add \((\ottt{a}_{W}, \ottt{a}_{E}, \neg X)\) where
      \(\neg X\) is the page in \(\{S, N\}\) that is not \(X\).
    \item Add \((\ottt{b}_{W}, \ottt{b}_{E}, \neg Y)\).
    \item Add \((\ottt{a}_E, r, X)\) and \((\ottt{a}_W, r, X)\).
    \item Add \((\ottt{b}_E, s, Y)\) and \((\ottt{b}_W, s, Y)\).
  \end{enumerate}
\end{definition}

Geometrically, this puts caps on the two ends of the bridge and surrounds its internal arcs.
All of the complication in the grammar comes from trying to linearize a circle and orient a
collection of unoriented arcs. We note that if the bridge word is exactly two letters \emph{and}
the two letters are adjacent as terminal vertices, then this will include a sequence of
three arcs that will have a trivial arc in the middle that will be removed via normalization.
Geometrically, this is easy: the two caps will have one shared endpoint.

\begin{definition}[Underpass avoidance]
  From the arc perspective of the bridge-looping word and bridge words, underpass avoidance no
  longer requires case-by-case, letter-by-letter analysis. Instead, we do the following for each
  bridge that crosses the underpass to be avoided. Let us suppose without loss of generality that
  the presence of label \ttt{a} in the middle of a bridge word indicates a crossing of the underpass
  with west end \(\ottt{a}\) and that we will avoid it via bridge \ttt{A}.
  \begin{enumerate}
    \item Get the multiset of the bridge-looping word for \ttt{A}.
    \item Remove arcs \((\ttt{a}, x, N)\) and \((\ttt{a}, y, S)\).
    \item Multiply all arc multiplicities by the number of crossings being removed.
    \item In each bridge that crosses at \ttt{a}, replace \((\ttt{a}, w, N)\) with
      \((x, w, N)\) and replace \((\ttt{a}, v, S)\) with \((y, v, S)\).
  \end{enumerate}
\end{definition}

Notice that we can do this in bulk. Geometrically, we do not need to operate one bridge at a time
for the avoidance in this form, but caution is nevertheless needed. Although the summed arcs across
all bridges will encode the diagram correctly, recovering a specific bridge after mixing the
arcs of multiple bridges may require full expansion of the diagram, and we have only an
exponential bound for the actual number of arcs when not compressing by multiplicity.
Because we need to know the bridges individually to find their bridge-looping words and
perform normalizations, we keep them separate during underpass avoidance.

Twice now, we have mentioned eliminating trivial arcs, by which we mean eliminating arcs
between binding-adjacent vertices. This is just the geometric equivalent of the normalizing
moves on the grammar. It also gives a practical bound on the number of normalizations that might
be performed in succession. Each move from \(3\) arcs to \(1\) arc (as in underpass avoidance)
occurs with the quotienting of multiple labels, and the number of labels decreases monotonically as
underpasses are closed. Each move from \(2\) arcs to \(1\) arc is the simplification of the end of
a bridge, and there are only two distinct non-terminal points that can be involved
(the vertices immediately east and west of the terminal vertex), so this can occur at most
\(4\) times per bridge isotopy, twice at each end; otherwise, there is a cycle on those
non-terminal vertices that would make the bridge self-intersecting. Any additional instances
would be handled by the rule for repeated-letter normalization.

Algorithmically, because arcs that are literally \((x, x, P)\) are not safe, we use
arc-bundle expansion to handle normalization and underpass avoidance. See \Fig{avoid} for the
technique used in underpass avoidance. For discharging trivial arcs as a kind of
diagram-altering quotient, we use the technique shown in \Fig{zipper}.
\Lemma{zipper} proves that this terminates in \(O(poly(n))\) steps despite not having a better
bound on weights than exponential.

\begin{lemma}\label{L:zipper}
  Two adjacent non-terminal vertices in a non-crossing path may be quotiented together
  in \(O(poly(n))\) steps.
\end{lemma}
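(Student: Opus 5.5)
The plan is to work entirely in the arc perspective, where each bridge is stored as a multiset of arcs with $O(n)$-bit multiplicities. By \Lemma{rhssize}, each page carries only $O(n)$ distinct chords. Quotienting two binding-adjacent non-terminal vertices $v_1,v_2$ will be realised as a \emph{zipper}: all arcs incident to $v_1$ and $v_2$ are merged into a single vertex $v$. Whenever this creates a trivial arc $(v,v,P)$, it is discharged by normalization, which splices the two neighbouring arcs of the path together. If there is no arc $v_1v_2$, we are exactly in the situation of \Lemma{quotient}. That lemma already shows the merge is well defined and reversible, and it costs one pass over the $O(n)$ distinct arcs with $O(n)$-bit additions, so $O(\mathrm{poly}(n))$. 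The real work is the case where arcs $(v_1,v_2,P)$ are present, possibly with exponential multiplicity.

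In that case, the steps are as follows.

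\textbf{Step 1.} Because the path is non-crossing and $v_1,v_2$ are binding-adjacent, every arc $(v_1,v_2,P)$ in page $P$ is outermost, i.e.\ parallel to the binding. Hence the arcs $(v_1,v_2,S)$ and $(v_1,v_2,N)$ form two parallel bundles, and no other chord separates their members.

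\textbf{Step 2.} Order the bundles. Along the path, a maximal run through the bundle has the form $x\,v_1v_2v_1\cdots v_i\,y$, alternating pages. After the quotient this is $x\,v\,v\cdots v\,y$, and the normalization $\ttt{aa}\mapsto\emptyset$ collapses it to $x\,v\,y$ or $x\,v\,v\,y$ according to the parity of the run length.

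\textbf{Step 3.} Identify the runs without expanding them. The runs are determined by how the outer arcs at $v_1$ and $v_2$ attach to the nested bundle. Since the bundles are nested, the $k$-th innermost arc in one page is matched with the $k$-th innermost in the other. The run structure is therefore governed by the \emph{differences} of multiplicities of the $O(n)$ arc classes incident to $v_1$ and $v_2$, exactly as in the arc-bundle expansion of \Fig{avoid}. Each bundle difference is resolved by one subtraction and a parity check on $O(n)$-bit integers. This yields, for each of the $O(n)$ incident arc classes, a new arc class at $v$ with a computed multiplicity.

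\textbf{Step 4.} Iterate. Merging may create new trivial arcs between $v$ and its new binding neighbour, and these are zipped in the same way. Each zip removes a vertex label, and by \Lemma{slpsize} and \Lemma{rhssize} no label is ever revived. There are therefore at most $O(n)$ successive zips. As discussed just before the lemma, end-of-bridge $2\to 1$ reductions occur at most four times per bridge, since otherwise the bridge would self-intersect. The total is $O(n)$ zips, each costing $O(n)$ arc classes times $O(n^2)$ bit arithmetic, which is polynomial.

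The main obstacle is Step 3: showing that the nested bundle structure lets us determine which parallel arcs get paired, and hence the parities of the runs, from multiplicities alone, without unrolling exponentially many arcs. The approach I would try first is a Euclidean-algorithm-style argument. Here $v_1,v_2$ play the role of a two-interval exchange, each bundle-matching step subtracts the smaller multiplicity from the larger, and the number of distinct steps is bounded by the $O(n)$ bit length. This would give the polynomial bound even when the weights are $2^{O(n)}$.
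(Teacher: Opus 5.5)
Your proposal has a genuine gap, and it is where you yourself put the main obstacle.

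\textbf{Step 3 is not proved, and the attached bound is wrong.} You only propose a Euclid-style matching. A process that repeatedly subtracts the smaller multiplicity from the larger is not bounded by the bit length: multiplicities $1$ and $2^{k}$ need about $2^{k}$ subtractions. Only quotient--remainder steps would give that bound, and nothing in your argument produces them.

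\textbf{The missing idea is the local triangle structure the paper uses.} Either some vertex $v_3$ is joined to both $v_1$ and $v_2$, or the weight of $v_1v_2$ is forced to be $0$ and \Lemma{quotient} finishes. In the first case, non-crossing forces $w(v_1v_2)\le\min(w(v_1v_3),w(v_2v_3))$; say the minimum is $w(v_1v_3)$. One then does the following (\Fig{zipper}):
\begin{itemize}
\item merge $v_1$ and $v_2$;
\item split $v_3$ into two copies;
\item give the merged vertex an edge of weight $w(v_2v_3)-w(v_1v_3)$ to one copy;
\item put the residual weight $w(v_1v_3)$ on a new trivial edge between the two copies.
\end{itemize}
This destroys the triangle and creates at most one new one. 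The defect is therefore a single bigon travelling along the path, and each step costs one subtraction on $O(n)$-bit numbers. The number of steps is bounded by the $O(n)$ distinct arcs of \Lemma{rhssize}, since revisiting an arc class would give a cycle in the original path.

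\textbf{Your Step 4 counting rests on the wrong picture of the cascade.} The collapse in Step 2 does not stop at $x\,v\,v\,y$, which normalizes further to $x\,y$. When $x=y=v_3$ this leaves the doubled letter $v_3\,v_3$. That is a trivial arc \emph{inside} the bundle $v_3$, not a trivial arc between $v$ and its new binding neighbour. Resolving it requires splitting the label $v_3$ rather than removing a label. So ``each zip removes a vertex label and none is revived'' does not bound the number of steps. The four-per-bridge bound on end-of-bridge reductions does not cover this internal propagation either. What bounds it in the paper is that the single travelling bigon never repeats an arc class.
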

\begin{proof}
  Let the adjacent non-terminal vertices be \(v_1\) and \(v_2\). If there is no edge between
  them, then \Lemma{quotient} completes the proof.
  
  Suppose there is an arc \(v_1v_2\) with weight \(w(v_1v_2)\) and a
  vertex \(v_3\) with edges \(v_1v_3\) and \(v_2v_3\). If \(w(v_1v_2)\) is greater than
  \(\min(w(v_1v_3), w(v_2v_3))\), then the path is self-intersecting, contrary to the
  non-crossing assumption. We split this vertex \(v_3\) into two copies and shift the
  edge weights as shown in \Fig{zipper}. This eliminates the triangle by creating
  \(0\) or \(1\) triangles elsewhere.

  In contrast, if there is no such \(v_3\), then we can choose an arbitrary \(v_3\)
  with no edge \(v_2v_3\), so \(w(v_1v_2) = \min(w(v_1v_3), w(v_2v_3)) = 0\),
  and \Lemma{quotient} lets us stop.

  By repeated application, we move the quotient along the path until we discharge the edges.
  This process must terminate because it would otherwise form a cycle, which would correspond to
  a cycle in the original path, contrary to assumption. By \Lemma{rhssize}, there
  are only \(O(n)\) distinct arcs to follow during this process.
\end{proof}

Although this process, which may be viewed as the action of a zipper moving a bigon through
the diagram, takes only \(O(n)\) steps, the bookkeeping required for doing so is complicated
enough that selection of algorithms for splitting the arc bundles, managing the bit arithmetic,
and following the diagram is non-trivial, so only \(O(poly(n))\) is assumed here.

In addition to the above proof, we note that each geometric change in the arcs after the
vertex quotient corresponds to the elimination of \(w(v_1v_2)\) letter-doubles in the
sentence. In the grammar, this is the internal normalization move. This is guaranteed to
terminate correctly in grammar terms, and \Lemma{zipper} is about counting how many steps
this can take. The grammar rule \(\ttt{aa} \to \emptyset\) is not context-free, but we may verify
the count of \Lemma{zipper} by examining the grammar, noting that eliminating \(x\)
doublings eliminates \(2x\) letters and lowers the total of the edge weights by \(2x\).
The elimination of one set of doubled-letters may expose a different set of doubled-letters,
but will not expose more than one distinct letter at a time, and no letter pair will repeat in the
normalization.

\nextLabel{fig:zipper}
\begin{figure}
\centering
\begin{subfigure}{0.47\textwidth}
  \centering
  \begin{tikzpicture}[every node/.append style={fill=diagrambg, inner sep=2pt}]
    \def\R{2}
    \coordinate (a) at (60:\R);
    \coordinate (b) at (120:\R);
    \coordinate (c) at (180:\R);
    \coordinate (e) at (270:\R);
    \coordinate (d) at (0:\R);
    \draw (a) -- node[midway, above] {\(w(ab)\)} (b);
    \draw (b) -- node[midway, above, sloped] {\(w(bc)\)} (c);
    \draw (a) -- node[midway, above, sloped] {\(w(ad)\)} (d);
    \draw (b) -- node[midway, below, sloped] {\(w(be)\)} (e);
    \draw (a) -- node[midway, below, sloped] {\(w(ae)\)} (e);
    \draw (c) -- node[midway, below, sloped] {\(w(ce)\)} (e);
    \draw (d) -- node[midway, below, sloped] {\(w(de)\)} (e);
    \node at (a) {\ttt{a}};
    \node at (b) {\ttt{b}};
    \node at (c) {\(\ttt{c}^*\)};
    \node at (e) {\ttt{e}};
    \node at (d) {\(\ttt{d}^*\)};
  \end{tikzpicture}
  \caption{Normalizing before quotient. The weight \(w(ab) = \min(w(ae), w(be))\), and
    we assume wlog that it is \(w(ae)\).}
\end{subfigure}
\hfill
\begin{subfigure}{0.47\textwidth}
  \centering
  \begin{tikzpicture}[every node/.append style={fill=diagrambg, inner sep=2pt}]
    \def\R{2}
    \coordinate (a) at (90:\R);
    \coordinate (c) at (180:\R);
    \coordinate (e1) at (240:\R);
    \coordinate (e2) at (300:\R);
    \coordinate (d) at (0:\R);
    \draw (a) -- node[midway, above, sloped] {\(w(bc)\)} (c);
    \draw (a) -- node[midway, above, sloped] {\(w(ad)\)} (d);
    \draw (a) -- node[pos=0.55, above, sloped] {\(w(be)-w(ae)\)} (e1);
    \draw (c) -- node[midway, below, sloped] {\(w(ce)\)} (e1);
    \draw (d) -- node[midway, below, sloped] {\(w(de)\)} (e2);
    \draw (e1) -- node[midway, below, sloped] {\(w(ae)\)} (e2);
    \node at (a) {\ttt{a}};
    \node at (c) {\(\ttt{c}^*\)};
    \node at (e1) {\(\ttt{e}_1\)};
    \node at (e2) {\(\ttt{e}_2\)};
    \node at (d) {\(\ttt{d}^*\)};
  \end{tikzpicture}
  \caption{Merging \ttt{a} and \ttt{b}, transferring the residual weight.}
\end{subfigure}
\caption{Normalization of trivial arcs via quotient moves. The vertices \ttt{a} and
  \ttt{b} are adjacent and non-terminal, and \ttt{e} is non-terminal.
  Labels \(\ttt{c}^*\) and \(\ttt{d}^*\) indicate sets of vertices
  (perhaps empty or singleton). The total weight is reduced by \(2w(ab) = 2w(ae)\).}\label{\pendingLabel}
\end{figure}

\subsection{Finding and Performing Loop Snipping}
Loop snipping operates similarly but not identically. On an arc-by-arc basis, finding potential
loop-snipping moves is straightforward, but we want to avoid expansion of arc bundles into
multiple copies of arcs. As with underpass-avoidance moves, we operate in bulk. In contrast with
underpass-avoidance moves, care is needed on ordering because the availability of the move
is conditioned on what other bridges are doing.

Fortunately, the non-crossing condition helps us here: we can, without sorting out the exact
nesting order of bridges, determine whether loop-snipping can be done simultaneously for a
set of bridges on a vertex-pair and page basis.

\begin{lemma}\label{L:loop-snipping}
  Let \(\{B_1, B_2, \ldots, B_n\}\) be the set of bridges in a \(3\)-page bridge presentation
  with punctures \(v_1, v_2, \ldots, v_{4n}\). For any pair of vertices \((v_a, v_b)\) and page
  \(X \in \{S, N\}\), the set of bridges with an eligible loop-snipping move to connect
  \(v_a\) and \(v_b\) in page \(X\) may be checked without unpacking the bridge words and,
  when a bridge can be shortened by this loop-snipping move, the multiplicity is also recoverable
  without unpacking the bridge words.
\end{lemma}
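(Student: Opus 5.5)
The plan is to work entirely in the arc perspective, where each bridge \(B_i\) is stored as a multiset of arcs \((x, y, P)\) with multiplicities, together with its initial and terminal arcs. I would first turn the geometric eligibility condition of \Definition{loop-snipping} into a condition on arcs alone. A loop-snipping move connecting \(v_a\) and \(v_b\) in page \(X\) is permitted only if no bridge has an equator crossing strictly between \(v_a\) and \(v_b\) along one of the two binding intervals. Because every interior equator crossing of any bridge is a non-terminal vertex that occurs as an endpoint of some arc in \(S \cup N\), this is equivalent to the following: some binding interval \(I\) between \(v_a\) and \(v_b\) contains no vertex that is an endpoint of an arc of positive multiplicity among the non-terminal arcs of any bridge. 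Checking this requires only the union of the supports of the arc multisets, which has size \(O(n)\) by \Lemma{rhssize}. So emptiness of \(I\) is decided in \(O(n)\) lookups per interval, without unpacking any bridge word.

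Next I would identify which bridges actually have a snippable fragment \(\ttt{a}\overrightarrow{\ttt{W}}\ttt{b}\) with \(\ttt{a} = v_a\), \(\ttt{b} = v_b\). Since \(I\) is empty of crossings, the fragment \(\overrightarrow{\ttt{W}}\) lies entirely on the complementary side. It closes up with the chord \((v_a, v_b, X)\) to form a loop bounding a disc that meets no other strand, as in \Fig{loop-snip}. The even-length requirement on \(\overrightarrow{\ttt{W}}\) is exactly the condition that the new arc lies in page \(X\); this follows from the alternation of pages along a bridge, which is read off from the pages of the arcs incident to \(v_a\) and \(v_b\). A bridge \(B_i\) is therefore eligible exactly when it has an arc at \(v_a\) and an arc at \(v_b\) whose pages are compatible with \(X\), which is again a support query on its multiset. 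To make this rigorous, I would use the non-crossing condition: strands through \(v_a\) and through \(v_b\) are nested, so the parallel copies of a bundle pair up in order. The \(k\)-th strand through \(v_a\) on the inner side is matched with the \(k\)-th strand through \(v_b\), in the sense that consecutive strands are separated by empty intervals. This is the same bundle-alignment idea used in \Lemma{zipper}.

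The multiplicity comes from the same alignment. The number of snippable loops for \(B_i\) is the number of paired strand copies, namely \(\min\) of the multiplicities of the relevant incident arcs at \(v_a\) and at \(v_b\) for that bridge. Performing the move subtracts the arcs of the loop, with that multiplicity, from the multiset and adds \((v_a, v_b, X)\) with the same multiplicity. All of this is \(O(n)\)-bit arithmetic on \(O(n)\) arcs.

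The main obstacle is the pairing step. I must justify that the ordering of parallel strands within a bundle, which the multiset does not store, is forced by non-crossing, so that counts alone determine which copies close loops. I would try to argue this by induction on nesting depth. The innermost strand adjacent to an empty interval must belong to a closed empty loop, because nested outerplanar chords cannot interleave. Peeling it off then reduces the multiplicity by one, and iterating this gives the min formula.
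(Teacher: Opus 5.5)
Your proposal has a genuine gap, and it is at the step you flag as the main obstacle. Two claims do not follow from the arc data: that \(B_i\) is eligible iff it has arcs at \(v_a\) and \(v_b\) in pages compatible with \(X\), and that the number of loops is the \(\min\) of the two incident multiplicities. Across \(I\), only the extreme crossing points of the two bundles are adjacent. Nothing in \(B_i\)'s multiset says that the strand leaving one of them through page \(X\) ever reaches the other. It may run to an end of the bridge, or arrive through the opposite page, so that \(\overrightarrow{\ttt{W}}\) is odd.

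Your justification, that ``the innermost strand adjacent to an empty interval must belong to a closed empty loop, because nested outerplanar chords cannot interleave,'' is not true. Non-crossing fixes the order of chords within a page, but it forces no strand to close up. So the support test can report eligibility that is not there, and the \(\min\) formula can overcount.

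Separately, emptiness of \(I\) does not make the loop empty. The fragment can leave \(a\) in page \(X\), cross the equator just east of a short bridge \(D\), run around \(D\) in the other page, cross back just west of it, and return to \(b\) in page \(X\). Then \(I\) stays empty, yet the loop encloses \(D\). \Table{rules} lists ``forms an empty loop'' as a separate requirement.

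The missing idea is the interaction between bridges. Strands of different bridges interleave inside a bundle, so the points adjacent to \(I\) may belong to some \(B_2\). In that case \(B_1\)'s loop becomes snippable only after \(B_2\)'s loop is removed, and per-bridge counts cannot tell you which loop is innermost. The paper's proof isolates exactly this obstruction and sidesteps it rather than computing a per-bridge formula. It performs all loop-snipping moves for the given \((v_a, v_b, X)\) simultaneously. This is justified by an innermost-first induction (in principle, not in practice) over subsets of eligible bridges. Your peeling induction has the right flavor, but it has to run over all bridges together, not bridge by bridge.

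You also skip the paper's case split. If \(v_a\) or \(v_b\) is terminal, the multiplicity is \(1\). If \(v_a = v_b\), the paper splits the vertex temporarily, snips, and re-quotients with \Lemma{zipper}. This is where nested loops, and hence larger multiplicities, naturally occur, since after eager quotienting distinct non-terminal positions are separated by terminal vertices. Neither your interval test nor your \(\min\) formula is meaningful in that case.
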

\begin{proof}
  If no loop-snipping move exists for the potential arc \(v_a, v_b, X\), then there is nothing
  to prove, so assume without loss of generality that some loop-snipping move exists that
  replaces a longer sequence with \((v_a, v_b, X)\).

  If either \(v_a\) or \(v_b\) is terminal, the arc to be replaced has multiplicity \(1\), so
  checking does not require geometric expansion. Suppose neither \(v_a\) nor \(v_b\) is terminal.

  Without loss of generality, assume that bridge \(B_1\) is a bridge to be shortened by loop
  snipping. By assumption, the arcs \((v_a, v_x, X)\) and \((v_b, v_y, X)\) in \(B_1\) can
  participate in loop-snipping and may do so in a multiplicity that can be determined from
  arc-bundle arithmetic. It is possible, however, if there is another bridge \(B_2\) that
  is also eligible for loop-snipping except for the possibility that \(B_1\) interferes,
  then we cannot be certain without full geometric unpacking which potential loop is
  innermost.

  If we perform all of the loop-snipping moves for \((v_a, v_b, X)\) simultaneously, this is
  not a problem. In principle, but importantly \emph{not} in practice, we choose the innermost
  loop-snipping move, perform it, and repeat until none remain. We can find and perform the
  move on one bridge and can add one more bridge to any subset for which we can do this, so
  by induction we can do it for any subset of (eligible) bridges.

  In the case where \(v_a = v_b\), we temporarily split \(v_a\) into two distinct vertices,
  perform the loop-snipping moves, and then quotient the two temporary vertices together,
  using the zipper technique of \Lemma{zipper}.

  The zipper technique applies generally. Although the normalization rule requires the
  vertices used in \Lemma{zipper} to be adjacent and non-terminal, this restriction is used
  in verifying the availability of moves across the diagram, not in performing the moves.
  On a bridge-by-bridge basis, we perform loop-snipping in the same way that we perform
  interior normalization.
\end{proof}

\section{Finding Locally Minimal Bridge Presentations}
\begin{theorem}\label{T:main}
  Given a knot diagram \(K\) with \(n\) crossings, a locally minimal bridge
  presentation of the same knot type can be found in polynomial time and
  space. The sequence of choices determining this presentation can be encoded
  as a certificate of length \(O(n)\) if only the bridge count is needed or
  \(O(n^2)\) for the entire diagram.
\end{theorem}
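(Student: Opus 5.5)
The plan is to assemble the theorem from the pieces already established, in the order the algorithm runs: construction, reduction loop, termination with local minimality, and then the complexity and certificate accounting.

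\textbf{Construction.} Starting from the diagram \(K\) with \(n\) crossings, the subhamiltonian cycle of \cite[Theorem 3]{Hamiltonian} is computed in \(O(n^2)\) time. The procedure of \Section{construction} then produces a \(3\)-page bridge embedding. By \Lemma{firstLength} this embedding has at most \(n\) bridges and is described by \(O(n)\) letters, equivalently \(O(n)\) arcs with multiplicity \(1\).

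\textbf{Reduction loop.} Working in the arc perspective, I would repeat the following large step.
\begin{enumerate}
\item Apply the eager rules (normalization, quotient, underpass closure) to exhaustion. Quotients and internal normalization are handled by the zipper of \Lemma{zipper}, and loop snipping is done in bulk by \Lemma{loop-snipping}, so no bridge word is ever expanded.
\item If some underpass \(\ottt{ab}\) has an adjacent bridge \ttt{A} with \(\ttt{a}^{\circ}\notin\ttt{A}\), perform underpass avoidance via \Definition{bridge-loop-arcs}, multiplying multiplicities by the number of crossings removed. This empties the underpass, after which the underpass closure rule fires and the bridge count drops by one.
\end{enumerate}

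\textbf{Termination and correctness.} Each move preserves the knot type, since avoidance and snipping are compositions with trivial knots (\Fig{avoid}, \Fig{loop-snip}) and quotients are reversible by \Lemma{quotient}. Termination holds because there are at most \(n\) underpass closures and no rule introduces new symbols. When no move remains, \Lemma{minimal} gives local minimality of the bridge number.

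\textbf{Complexity.} By \Lemma{rhssize}, at every stage at most \(O(n)\) distinct arcs are present. By \Lemma{slpsize}, the expanded sentence has length \(2^{O(n)}\), so each multiplicity fits in \(O(n)\) bits. Space is therefore \(O(n^3)\) as computed in \Section{complexity}. Time per large step is polynomial: there are \(O(n)\) bridges, \(O(n)\) arcs, polynomially many zipper steps, and \(O(n)\)-bit arithmetic. Across the \(O(n)\) large steps this yields the \(O(\mathrm{poly}(n))\) bound.

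\textbf{Certificate.} The only non-eager choices are, at each closure, which underpass is avoided and via which of its two bridges. That is \(O(n)\) choices over \(O(n)\) steps, each encodable in \(O(1)\) symbols relative to the current labels, giving an \(O(n)\) certificate for the bridge count. Given the certificate, the eager rules deterministically reproduce the run. For the entire diagram, I would instead output the final arc multiset: \(O(n)\) arcs by \Lemma{rhssize}, each with an \(O(n)\)-bit weight. That gives \(O(n^2)\) bits, matching the \(O(n^2)\) size in the abstract.

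\textbf{Main obstacle.} The delicate step is the time bound for the eager normalization and loop-snipping cascades between avoidances, since multiplicities can be exponential. I would argue this by combining the non-repetition of letter pairs in \Lemma{zipper} with the \(O(n)\) distinct arcs of \Lemma{rhssize}. Together these bound the number of bulk operations per cascade polynomially in \(n\), independently of the weights. A secondary worry is confirming that eager application never destroys the certificate's reproducibility. For this I would fix a canonical order, such as lowest label first, for the eager rules.
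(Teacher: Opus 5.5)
Your proposal is correct relative to the paper's earlier lemmas and follows the paper's proof essentially step for step: the \(O(n^2)\) construction with \Lemma{firstLength}, then avoidance followed by closure with the eager moves done on arc counts via \Lemma{zipper} and \Lemma{loop-snipping}, then \(O(n)\)-bit multiplicities from \Lemma{slpsize} and \Lemma{rhssize}, local minimality from \Lemma{minimal}, and a certificate recording the avoidances in order and direction. The only difference is cosmetic: the paper puts the initial \(3\)-page bridge embedding into the certificate explicitly, at a cost of \(O(n)\), whereas you rely on deterministic recomputation together with a canonical order for the eager rules.
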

\begin{proof}
  We first convert the diagram to a \(3\)-page bridge embedding. From Bekos, Gronemann,
  and Raftopoulou \cite{Hamiltonian}, we can do this in time \(O(n^2)\).
  From \Lemma{firstLength}, the length of the sentence describing this embedding is
  \(O(n)\).

  From \Lemma{rhssize}, the number of distinct arcs in the diagram is \(O(n)\).
  Because the grammar forms a straight-line program (see \Lemma{slpsize}) with
  maximum length \(2^{O(n)}\) and each crossing is represented by a letter in the final
  full characterization, the largest arc multiplicity possible is \(2^{O(n)}\), which
  requires \(O(n)\) bits to store. We may give the full diagram in space \(O(n^2)\)
  by summing all arcs together across the remaining bridges, at the cost of making the
  per-bridge information take (naively) exponential time to reconstruct.

  On time, we require arithmetic on \(O(n)\)-bit numbers, which takes time
  \(O(n^2)\) under typical complexity rules.

  Between each bridge reduction, we must perform \(1\) underpass avoidance for each crossing being
  moved off of the underpass being avoided, after which the underpass is closed in constant time.
  Performing the avoidance is done by manipulating the arc counts. Specifically,
  replacing an arc repeated \(k\) times in \(m\) places, with all repetition counts being
  \(O(n)\)-bit numbers, can be effected by multiplying the counts together and then summing them.
  The cost \(O(n^2)\) for multiplication dominates this time. We track these counts separately
  for each bridge.

  Like underpass avoidance, the eager moves can be done via the manipulation of arc counts
  (\Lemma{zipper}, \Lemma{loop-snipping}), rather than direct manipulation of the
  characterization sequence. Normalization of non-terminal
  arcs to remove trivial arcs can require chasing the unwinding, but only up to indistinguishable
  arcs. From \Lemma{rhssize}, there are at most \(O(n)\) distinct arcs involved for \(O(n)\),
  bridges. For loop-snipping moves, there are only \(O(n^2)\) pairings of arcs to check
  at each stage, and performing the snipping move is bit arithmetic, rather than geometric
  unwinding. The bookkeeping for doing these moves has only been shown to be polynomial,
  giving time and space bound \(O(poly(n))\).

  By \Lemma{minimal}, the sequence of moves produces a bridge presentation that is locally
  minimal.

  The certificate contains the initial \(3\)-page bridge embedding, the underpass avoidances
  in order and direction, and either the number of bridges or the resulting diagram. The
  certificate therefore has length \(O(n)\) for only the bridge count and length \(O(n^2)\)
  for the resulting bridge presentation.
\end{proof}

\begin{corollary}
  Given a knot diagram \(K\) with \(n\) crossings, it can be decided in polynomial time and
  space whether \(K\) is trivial, with a certificate of size \(O(n)\).
\end{corollary}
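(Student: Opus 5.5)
The corollary should follow from \Theorem{main} combined with two facts. First, the unknot is the only knot of bridge number \(1\). Second, by Otal \cite{Otal1982}, every bridge presentation of the trivial knot whose bridge count is locally minimal has exactly one bridge. The plan is to run the algorithm of \Theorem{main} on \(K\) and read off the number of remaining bridges. We declare \(K\) trivial exactly when that number is \(1\).

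The first step is to fix the input. We take the diagram \(K\) with \(n\) crossings and build the \(3\)-page bridge embedding of \Section{construction}. By \Lemma{firstLength}, this takes \(O(n^2)\) time and gives a sentence of length \(O(n)\). If \(n=0\), the diagram is trivially the unknot, so we handle that case separately.

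The second step is to apply \Theorem{main}, which produces a locally minimal bridge presentation of the same knot type in polynomial time and space. Along the way we record the certificate consisting of the initial embedding, the ordered underpass avoidances with their directions, and the final bridge count. \Theorem{main} bounds this certificate by \(O(n)\) when only the bridge count is kept.

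The third step is correctness, argued in both directions.
\begin{itemize}
\item If the final count is \(1\), the knot has a \(1\)-bridge presentation. Such a presentation is the unknot, since a single bridge together with a single underpass bounds a disc.
\item If \(K\) is trivial, the output is a locally minimal presentation of the unknot. By Otal's theorem it cannot have more than one bridge, so the count is \(1\).
\end{itemize}
Hence the count equals \(1\) if and only if \(K\) is trivial. Verifying the certificate means replaying the recorded moves, which by \Section{complexity} costs polynomial time.

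The main obstacle is the second direction. It requires that the notion of ``locally minimal'' delivered by \Lemma{minimal}, namely that no grammatical move is available, implies local minimality in the sense Otal uses. Otal's sense is that the presentation admits no destabilization or reduction through isotopies preserving bridge position. I would close this gap by invoking the final paragraph of the proof of \Lemma{minimal}. That paragraph states that no sequence of geometric moves lowers the bridge number without leaving bridge position, which is precisely the hypothesis Otal's result needs for the trivial knot.
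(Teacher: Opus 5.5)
Your argument is essentially the paper's own: run the procedure of \Theorem{main}, apply Otal's theorem \cite{Otal1982} that a locally minimal bridge presentation of the trivial knot has exactly one bridge, accept precisely when one bridge remains, and use the bridge-count-only certificate of length $O(n)$ from \Theorem{main}. The paper treats the compatibility of notions you flag the same way, simply reading the conclusion of \Lemma{minimal} as local minimality in Otal's (unperturbed) sense; citing that conclusion directly is cleaner than citing its proof's final paragraph, whose phrasing about isotopies that avoid ``leaving bridge position'' is not literally the no-canceling-disk-pair hypothesis Otal's theorem needs.
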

\begin{proof}
  From Otal \cite{Otal1982}, a locally minimal bridge presentation of the trivial knot has
  exactly \(1\) bridge. From \Theorem{main}, a locally minimal bridge count may be
  found for \(K\) in polynomial time and space. \(K\) is trivial if and only if
  the count is \(1\), and the certificate that omits the diagram is sufficient for
  unknot recognition.
\end{proof}

\appendix

\section{Minimizing a Trefoil}
We minimize a trefoil, starting with the sentence \ttt{+142+304+520;014523}. (See \Fig{trefoil}.)

Apply underpass avoidance to avoid underpass \ottt{23} via bridge \ttt{3}.
Notice that \(\ttt{3}^{\circ} = \ttt{2}\), contained in bridge \ttt{5}. The
bridge-looping word around bridge \ttt{3} is \ttt{+2014032}. In this case, it is
small enough to find this by inspection: the interior of bridge \ttt{3} is \ttt{0},
with end pair \((\ttt{2}, \ttt{3})\) next to \ottt{3} and
end pair \((\ttt{1}, \ttt{4})\) next to \ottt{4}. To avoid \ttt{2},
we use the internal \ttt{01403}. If \ttt{2} appeared first in page \(S\), we would
need to use \ttt{30410} instead.

Applying the rule, \(\ttt{+520} \mapsto \ttt{+5014030}\), and then normalization gives
\ttt{+501400} and then \ttt{+50140} since both \ttt{0} and \ttt{3} are adjacent
to \ottt{0}. Now, we have the sentence \ttt{+142+304+50140;014523}. We close underpass
\ottt{23} to combine the bridge words for bridges \ttt{1} and \ttt{3},
getting the word \ttt{+14304}. We quotient the now non-terminal points,
the crossing vertex \ttt{2}, and the non-crossing vertex \ttt{3},
sending them all to the non-crossing vertex \ttt{5}, so the word becomes \ttt{+14504}.

The non-terminal points \ttt{0} and \ttt{4} are the underpass-crossing points,
and every bridge has its own undercrossing point. The resulting sentence
is \ttt{+14504+50140;0145}. See \Fig{trefoil2}, which
shows the pages separately in compressed form (all edges have weight \(1\) here) and, in
\Fig{trefoil2-d}, joined and fully expanded.

\nextLabel{fig:trefoil2}
\begin{figure}
  \centering
  \begin{subfigure}[t]{0.47\textwidth}
    \centering
    \begin{tikzpicture}[every node/.append style={fill=diagrambg, inner sep=2pt}]
        \coordinate (v0) at (0:1);
        \coordinate (v0e) at (45:1);
        \coordinate (v1) at (90:1);
        \coordinate (v1e) at (135:1);
        \coordinate (v4) at (180:1);
        \coordinate (v4e) at (225:1);
        \coordinate (v5) at (270:1);
        \coordinate (v5e) at (315:1);
        \draw[local binding] (0, 0) circle[radius=1];
        \draw[local strand] (v5e) -- (v4e);
        \draw[local strand] (v4e) -- (v0);
        \draw[local strand] (v0e) -- (v4);
        \draw[local strand] (v0e) -- (v1e);
        \node at (v0) {\ttt{0}};
        \node at (v0e) {\(\bullet\)};
        \node at (v1) {\ttt{1}};
        \node at (v1e) {\(\circ\)};
        \node at (v4) {\ttt{4}};
        \node at (v4e) {\(\bullet\)};
        \node at (v5) {\ttt{5}};
        \node at (v5e) {\(\circ\)};
    \end{tikzpicture}
    \caption{Page \(S\)}
  \end{subfigure}
  \begin{subfigure}[t]{0.47\textwidth}
    \centering
    \begin{tikzpicture}[every node/.append style={fill=diagrambg, inner sep=2pt}]
        \coordinate (v0) at (0:1);
        \coordinate (v0e) at (45:1);
        \coordinate (v1) at (90:1);
        \coordinate (v1e) at (135:1);
        \coordinate (v4) at (180:1);
        \coordinate (v4e) at (225:1);
        \coordinate (v5) at (270:1);
        \coordinate (v5e) at (315:1);
        \draw[local binding] (0, 0) circle[radius=1];
        \draw[local strand] (v5e) -- (v0e);
        \draw[local strand] (v5) -- (v0e);
        \draw[local strand] (v4e) -- (v1);
        \draw[local strand] (v4e) -- (v1e);
        \node at (v0) {\ttt{0}};
        \node at (v0e) {\(\bullet\)};
        \node at (v1) {\ttt{1}};
        \node at (v1e) {\(\circ\)};
        \node at (v4) {\ttt{4}};
        \node at (v4e) {\(\bullet\)};
        \node at (v5) {\ttt{5}};
        \node at (v5e) {\(\circ\)};
    \end{tikzpicture}
    \caption{Page \(N\)}
  \end{subfigure}
  \begin{subfigure}[t]{0.47\textwidth}
    \centering
    \begin{tikzpicture}[every node/.append style={fill=diagrambg, inner sep=2pt}]
        \def\aZero{0} \def\aOne{90} \def\aFour{180} \def\aFive{270}
        \def\rN{1} \def\rOut{1.25} \def\rOutA{1.5} \def\rOutC{2}
        \draw[draw=none, use as bounding box] (-\rN, -\rOutA) rectangle (\rN, \rOutC);
        \coordinate (v0) at (\aZero:1);
        \coordinate (v1) at (\aOne:1);
        \coordinate (v4) at (\aFour:1);
        \coordinate (v5) at (\aFive:1);
        \draw[local binding] (0, 0) circle[radius=1];
        \draw[local strand] (v0) arc[start angle=\aZero, end angle=\aOne, radius=1];
        \draw[local strand] (v4) arc[start angle=\aFour, end angle=\aFive, radius=1];
        \node at (v0) {\ttt{0}};
        \node at (v1) {\ttt{1}};
        \node at (v4) {\ttt{4}};
        \node at (v5) {\ttt{5}};
    \end{tikzpicture}
    \caption{Page \(U\)}
  \end{subfigure}
  \begin{subfigure}[t]{0.47\textwidth}
    \centering
    \begin{tikzpicture}[every node/.append style={fill=diagrambg, inner sep=2pt}]
      \def\aZero{0} \def\aZeroA{36} \def\aZeroB{72} \def\aOne{108} \def\aOneA{144}
      \def\aFour{180} \def\aFourA{216} \def\aFourB{252} \def\aFive{288} \def\aFiveA{324}
      \def\rIn{0.75} \def\rN{1} \def\rOut{1.25} \def\rOutA{1.5} \def\rOutB{1.75} \def\rOutC{2}
      \draw[draw=none, use as bounding box] (-\rOutB, -\rOutA) rectangle (\rOutB, \rOutC);
      \draw[local binding] (0, 0) circle[radius=1];
      \coordinate (v0) at (\aZero:\rN);
      \coordinate (v1) at (\aOne:\rN);
      \coordinate (v4) at (\aFour:\rN);
      \coordinate (v5) at (\aFive:\rN);
      \draw[local strand] (v0) arc[start angle=\aZero, end angle=\aOne, radius=\rN];
      \draw[local strand] (v4) arc[start angle=\aFour, end angle=\aFive, radius=\rN];
      \draw[local strand] (v0) -- (\aZero:\rOutB) arc[start angle=\aZero, end angle=\aFourA, radius=\rOutB]
        -- (\aFourA:\rIn) -- (\aOneA:\rIn)
        -- (\aOneA:\rOut) arc[start angle=\aOneA, end angle=\aZeroB, radius=\rOut]
        -- (\aZeroB:\rIn) -- (\aFive:\rIn) -- (v5);
      \draw[local strand] (v1) -- (\aFourB:\rIn) -- (\aFourB:\rOutA) arc[start angle=\aFourB, end angle=\aFiveA, radius=\rOutA]
        -- (\aFiveA:\rIn) -- (\aZeroA:\rIn) -- (\aZeroA:\rOutA) arc[start angle=\aZeroA, end angle=\aFour, radius=\rOutA] -- (v4);
      \node at (v0) {\ttt{0}};
      \node at (v1) {\ttt{1}};
      \node at (v4) {\ttt{4}};
      \node at (v5) {\ttt{5}};
    \end{tikzpicture}
    \caption{All pages together}
    \label{\pendingLabel-d}
  \end{subfigure}
  \caption{Pages of the \(3\)-page embedding of the \(2\)-bridge presentation of a trefoil with
    sentence \ttt{+14504+50140;0145}. Vertex labels \ttt{0}, \ttt{1}, \ttt{4},
    and \ttt{5} indicate terminal vertices. Vertex labels \(\bullet\) and \(\circ\)
    indicate non-terminal vertices that are crossing (\(\bullet\)) or non-crossing (\(\circ\))
    in the knot diagram.}
  \label{\pendingLabel}
\end{figure}

\section{Finding the Right Bridge-Looping Word}
The main text asserts that finding the correct part of the bridge-looping word to use for
underpass avoidance depends on only terminal vertex positions and word length. We now
prove that.

\begin{lemma}
  The correct order between \(\ttt{a}^{\circ}\) and \(\ttt{a}'\) switches
  when the bridge word has even length.
\end{lemma}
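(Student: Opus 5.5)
We track pages along the bridge word. By the arc decomposition of a bridge word described in the section on sentence and arc perspectives, a bridge word \(\pm\ottt{a}\ttt{W}\ottt{c}\) of length \(k\) encodes \(k-1\) arcs. Their pages alternate, starting in \(N\) for \ttt{+} and \(S\) for \ttt{-}. So the final arc, the one incident to \ottt{c}, lies on the initial page when \(k-1\) is odd, that is, when \(k\) is even. It lies on the opposite page when \(k\) is odd. The plan is to show that the correct choice between the two characterizations of \(\ttt{A}'\), namely \(\pm\ttt{a}^{\circ}\ttt{Wc}^{\circ}\ttt{c}'\overleftarrow{\ttt{W}}\ttt{a}'\ttt{a}^{\circ}\) and \(\pm\ttt{a}^{\circ}\ttt{Wc}'\ttt{c}^{\circ}\overleftarrow{\ttt{W}}\ttt{a}'\ttt{a}^{\circ}\), depends on only two things. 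One is the page of the cap placed at \ottt{c}. The other is whether \ottt{c} is the west or the east end of its underpass, which is visible from the circle word.

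First, I would use \Definition{bridge-loop-arcs} to fix the cap at \ottt{c}. If the terminal arc is \((\ottt{c}, \ttt{s}, Y)\), the cap is the arc \((\ottt{c}_W, \ottt{c}_E, \neg Y)\), flanked by \((\ottt{c}_W,\ttt{s},Y)\) and \((\ottt{c}_E,\ttt{s},Y)\). By the convention that non-terminal vertices carry the label of the nearest terminal vertex to the west, the two neighbours of \ottt{c} are named \(\ttt{c}\) (east) and the label of the terminal vertex west of \ottt{c}. Exactly one of the two lies on the underpass incident to \ottt{c}, so it is \(\ttt{c}^{\circ}\). Which one it is depends on whether the underpass leaves \ottt{c} westward or eastward.

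Next, I would read the loop in the written direction: out along \ttt{W}, across the cap, and back along \(\overleftarrow{\ttt{W}}\). The traversal reaches the cap from the side of the final arc on page \(Y\). Since the loop surrounds the bridge, the outgoing strand is the one on the near side, so the first cap endpoint written is fixed by the orientation of the cap relative to the equator. Arriving on page \(Y=N\) versus \(Y=S\) reverses which of \(\ottt{c}_W,\ottt{c}_E\) is met first, because an anticlockwise traversal viewed from \(N\) is clockwise viewed from \(S\). Combining this with the first paragraph, the order of \(\ttt{c}^{\circ}\) and \(\ttt{c}'\) is determined by \(Y\) together with the underpass side. For fixed terminal positions it therefore flips exactly as \(k\) passes between odd and even. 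The same argument applies to the order of \(\ttt{a}'\) and \(\ttt{a}^{\circ}\) at the closing end, relative to the initial sign.

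The main obstacle is making the claim ``arriving via page \(Y\) forces a definite first endpoint'' rigorous. One must also check that the wrong choice crosses \ttt{A}, as the earlier remark asserts, rather than merely changing orientation. I would handle this with the non-crossing condition on page \(Y\): the returning strand \(\overleftarrow{\ttt{W}}\) must be nested inside the outgoing strand along every chord, and only one assignment of the cap endpoints is consistent with that nesting. I would check the base case \(k=2\) (no interior letters) directly, using the trefoil computation in the appendix, where \ttt{01403} versus \ttt{30410} is selected by the page on which \ttt{2} first appears. An induction on the length of \ttt{W} then shows that each added letter preserves the nesting and toggles the page of the final arc.
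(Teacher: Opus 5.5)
Your parity count is correct and is the paper's starting point: a word of length \(k\) encodes \(k-1\) arcs, so even length means an odd number of arcs. The gap is in turning this into an east/west statement at \ottt{c}.

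You claim the order of \(\ttt{c}^{\circ}\) and \(\ttt{c}'\) depends only on the final page \(Y\) and on the side of the underpass at \ottt{c}. That is false. Which of the two parallel strands is the outgoing one is decided at the other end, by the initial page \(X\) and by whether \(\ttt{a}^{\circ}\) is \(\ottt{a}_W\) or \(\ottt{a}_E\). For example, put \ottt{a} at angle \(0\), \ottt{c} at angle \(\pi\), and let \(\ttt{a}^{\circ}=\ottt{a}_W\):
\begin{itemize}
\item a one-arc bridge in \(N\) carries the strand starting at \(\ttt{a}^{\circ}\) to \(\ottt{c}_E\);
\item a bridge running through \(S\) to an interior vertex near angle \(\pi/2\), then through \(N\) to \ottt{c}, carries it to \(\ottt{c}_W\).
\end{itemize}
Both have the same \(Y\) and the same terminal data, yet opposite orders. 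Your claimed dependence would give them the same order, contradicting the lemma itself, since the word lengths are \(2\) and \(3\). It yields the lemma only if the initial sign is silently held fixed, so that \(Y\) and parity determine each other. Even then, the step ``arriving via \(Y\) forces a definite first endpoint'' is exactly what you leave unproved. Your remark about the closing end does not help: both characterizations end in \(\ttt{a}'\ttt{a}^{\circ}\), so the only choice is at \ottt{c}, and that is where the start must enter.

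The proposed repair does not supply this step. In a cyclically ordered page, neither parallel copy of a chord is intrinsically nested inside the other, so your nesting invariant is undefined until you fix a side of the bridge, which is the missing point. The trefoil computation is also not a \(k=2\) base case, since bridge \ttt{+304} has length \(3\). Moreover, \ttt{01403} and \ttt{30410} are reversals of one another. That is the orientation choice handled by the later appendix lemma on page agreement, not the choice between \(\ttt{c}^{\circ}\ttt{c}'\) and \(\ttt{c}'\ttt{c}^{\circ}\).

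The idea you need is the paper's local one. A parallel copy of a chord, on either page, that starts immediately west of one endpoint ends immediately east of the other. At each interior vertex the strand passes through the equator on the same side of that vertex. So the side flips once per arc, and after \(k-1\) arcs it has flipped exactly when \(k\) is even. Combined with the fixed positions of \(\ttt{a}^{\circ}\) and \(\ttt{c}^{\circ}\), this is the lemma.

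Your orientation remark can be made into an equivalent global argument, but only by following one fixed side of the oriented bridge. Draw \(N\) inside the circle with east anticlockwise, and take the strand on the left of the bridge oriented from \ottt{a} to \ottt{c}. It starts at \(\ottt{a}_W\) if \(X=N\) and at \(\ottt{a}_E\) if \(X=S\). It ends at \(\ottt{c}_E\) if \(Y=N\) and at \(\ottt{c}_W\) if \(Y=S\). So its end-sides differ exactly when \(X=Y\), i.e.\ when \(k\) is even. This again requires tracking the start at \ottt{a}, which is the ingredient your argument omits.
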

\begin{proof}
  An even-length word implies an odd number of arcs. Suppose we begin immediately west of one
  end of a chord and follow the chord. At the other end of the chord, we will be immediately
  east of the far end of the chord. Induction on parity completes the proof.
\end{proof}

\begin{lemma}
  The correct side to start avoidance depends on the orientation of the two ends of the underpass being
  avoided.
\end{lemma}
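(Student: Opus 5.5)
Fix the underpass \(\ottt{ab}\) being avoided, with \ottt{a} and \ottt{b} adjacent terminal vertices on the binding equator, and the bridge \ttt{A} used for the avoidance. I would show that which of the two characterizations of \(\ttt{A}'\) in the remark after \Definition{bridge-loop}, namely \(\pm\ttt{a}^{\circ}\ttt{Wc}^{\circ}\ttt{c}'\overleftarrow{\ttt{W}}\ttt{a}'\ttt{a}^{\circ}\) versus \(\pm\ttt{a}^{\circ}\ttt{Wc}'\ttt{c}^{\circ}\overleftarrow{\ttt{W}}\ttt{a}'\ttt{a}^{\circ}\), is correct is decided by two binary data only. The first is whether \ottt{a} is the west or the east end of \(\ottt{ab}\) in the cyclic order. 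The second is the parity of the length of \ttt{A}, which the previous lemma already controls.

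I would work in the arc form of \Definition{bridge-loop-arcs}, where no orientation has to be chosen. The cap at \ottt{a} is the pair \((\ottt{a}_E, r, X)\), \((\ottt{a}_W, r, X)\) joined by \((\ottt{a}_W, \ottt{a}_E, \neg X)\). Since \(\ottt{ab}\) is an underpass with \(\ottt{b}\) adjacent to \(\ottt{a}\), the crossing vertex \(\ttt{a}^{\circ}\) (the non-terminal vertex sitting on the underpass) is exactly \(\ottt{a}_E\) if \ottt{a} is the west end, and \(\ottt{a}_W\) otherwise; the other neighbour is \(\ttt{a}'\).

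Linearizing the loop so that it starts at \(\ttt{a}^{\circ}\) and first traverses the parallel copy of \ttt{A} fixes on which side of \ttt{A} the traversal runs. In the previous lemma's language, it starts immediately east or immediately west of \ottt{a}. The previous lemma then says that after following the parallel copy of \ttt{A} to the far end, we arrive on the same side (odd length) or the opposite side (even length) at \ottt{c}. That determines whether the far cap is entered at \(\ttt{c}^{\circ}\) or \(\ttt{c}'\), and so which of the two words is produced.

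Combining the two facts, the correct word is a function of the XOR of two bits: west/east end of the underpass, and the length parity. Flipping the orientation of \(\ottt{ab}\), that is, swapping which end is west, toggles the starting side and hence the choice, which is the statement. The main obstacle is making "starting side" precise under the non-crossing condition. The parallel copy must not cross \ttt{A} itself, and the correct word is exactly the one whose initial arc lies on the side of \ttt{A} away from the underpass. I would verify this on a single chord, as in the previous lemma, and then induct along the arcs. I would also check the degenerate case flagged after \Definition{bridge-loop-arcs}, where the two caps share an endpoint, directly against the trefoil example in the appendix.
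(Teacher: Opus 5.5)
Your core step is the same as the paper's. The starting side is read off from which end of \(\ottt{ab}\) the bridge sits at (\(\ttt{a}^{\circ}=\ottt{a}_E\) exactly when \ottt{a} is the west end). The previous lemma's parity rule then decides on which side of the far end the parallel copy of \ttt{A} arrives. That much proves the lemma as stated.

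The problem is the stronger claim you organize the proposal around: that the choice between \(\pm\ttt{a}^{\circ}\ttt{Wc}^{\circ}\ttt{c}'\overleftarrow{\ttt{W}}\ttt{a}'\ttt{a}^{\circ}\) and \(\pm\ttt{a}^{\circ}\ttt{Wc}'\ttt{c}^{\circ}\overleftarrow{\ttt{W}}\ttt{a}'\ttt{a}^{\circ}\) is the XOR of two bits. Start side and parity only tell you whether you reach \(\ottt{c}_E\) or \(\ottt{c}_W\). Whether that vertex is \(\ttt{c}^{\circ}\) or \(\ttt{c}'\) depends on whether \ottt{c} is the west or east end of its \emph{own} underpass, which is a third bit independent of the other two. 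This is the ``relative positions of the two vertices bounding the underpass incident to \ttt{c}'' in the remark after \Definition{bridge-loop}. The paper's proof sidesteps it by stating the far-end decision only as east versus west.

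The trefoil appendix illustrates this. Avoiding \(\ottt{23}\) via \ttt{+304} (so \ottt{3} is an east end and the length is odd) lands at \(\ottt{4}_W\), labelled \ttt{1}. The word \ttt{+2014032} has the \(\ttt{c}'\ttt{c}^{\circ}\) form only because \ottt{4} is the west end of \(\ottt{45}\). Had \ottt{4} been an east end, the same two bits would call for the other form.

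The criterion in your last paragraph also cannot do the job. The two candidate words agree through the last letter of \ttt{W}, so for nonempty \ttt{W} they share their initial arc. That arc starts at \(\ttt{a}^{\circ}\), which lies on the underpass, so it runs on the underpass side of \ttt{A}, not away from it. The candidates differ only in how they enter the far cap, where the wrong one crosses the last arc of \ttt{A}. That is where your non-crossing check, and the far-end orientation bit, must enter.
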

\begin{proof}
  If the bridge being taken is west (resp., east) of the crossing point, then we need to start the
  avoidance west (resp., east) of that terminal endpoint. We can then use the parity of the bridge
  being used in order to determine whether to initially pass east or west of the far end of the
  bridge. We will pass east and west in succession, so we are deciding only the order here.
\end{proof}

\begin{lemma}
  Whether to reverse the bridge-looping word depends on page agreement between the arc being
  re-routed and the arc being used for re-routing.
\end{lemma}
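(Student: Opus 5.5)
The plan is to use the arc form of the bridge-looping word (\Definition{bridge-loop-arcs}) together with the fact, already used in the two preceding lemmas, that the geometry of $\ttt{A}'$ does not depend on orientation. Only the linearization does: which end we enter and which direction we traverse. In the underpass avoidance move, the letter $\ttt{a}^{\circ}$ in some bridge word $\ttt{C}$ splits into two arcs, $(\ttt{x},\ttt{a}^{\circ},P)$ coming in and $(\ttt{a}^{\circ},\ttt{y},\neg P)$ going out. The replacement fragment (the looping word with its initial and terminal $\ttt{a}^{\circ}$ omitted) must start with an arc in page $\neg P$ and end with an arc in page $P$. Only then do the page alternation and well-formedness of $\ttt{C}$ survive the splice.

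First, I would record the page of the first arc of $\ttt{A}'$ in its written position $\pm\ttt{a}^{\circ}\ttt{W}\ldots\ttt{a}'\ttt{a}^{\circ}$. By \Definition{bridge-loop-arcs}, the cap arcs at $\ottt{a}$ lie in page $X$ (the page of the initial arc of $\ttt{A}$), except for the closing chord $(\ottt{a}_W,\ottt{a}_E,\neg X)$. So the arcs of $\ttt{A}'$ incident to $\ttt{a}^{\circ}$ are exactly one arc in $X$ and one in $\neg X$, and the sign of the written word fixes which comes first. Reversing the word reverses the order of letters. Because $\ttt{A}'$ has length $2k+1$, which is odd, the remark after \Definition{bridge-loop} says the sign swaps. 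Reversal therefore exchanges which of the two arcs at $\ttt{a}^{\circ}$ is traversed first.

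Second, I would compare pages. If the arc of $\ttt{C}$ arriving at $\ttt{a}^{\circ}$ lies in the same page as the first arc of the written $\ttt{A}'$, the splice would put two consecutive arcs in one page. Equivalently, it would produce a doubled letter that normalization would collapse back, undoing the avoidance. In that case the reversed word is required. If the pages disagree, the forward word splices correctly. So the choice between forward and reversed depends only on whether the re-routed arc of $\ttt{C}$ and the rerouting arc of $\ttt{A}'$ at $\ttt{a}^{\circ}$ share a page. This is the claim, and the Trefoil appendix (choosing \ttt{01403} versus \ttt{30410}) serves as a sanity check.

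The main obstacle is confirming that the page condition is not entangled with the east/west choice already settled by the previous two lemmas. Those lemmas fix the order of $\ttt{c}^{\circ}$ and $\ttt{c}'$ inside the word, and the entry side at $\ottt{a}$. I would need to check that reversing the looping word preserves those choices up to the symmetric relabeling, so that page agreement is the only remaining degree of freedom. I would handle this by working entirely in arc form, where the multiset is invariant under reversal and only the first arc's page changes.
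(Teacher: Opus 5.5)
Your setup is sound. In arc form, the two arcs of \(\ttt{A}'\) at \(\ttt{a}^{\circ}\) lie in opposite pages, and reversing the odd-length word swaps the sign and hence which of them comes first. But the rule you derive in your second step is backwards.

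Because the initial and terminal \(\ttt{a}^{\circ}\) are omitted, the splice turns \(\ttt{x}\ttt{a}^{\circ}\ttt{y}\) into \(\ttt{x}f_1\cdots f_m\ttt{y}\). The arc \((\ttt{x},\ttt{a}^{\circ},P)\) of \ttt{C} and the first arc \((\ttt{a}^{\circ},f_1)\) of the looping word are fused into the single arc \((\ttt{x},f_1)\), which the alternation in \ttt{C} places in page \(P\). Two conditions must hold. That arc, and by alternation every following one, must be an arc of the loop around \ttt{A} on its correct page. And the new path must not cross the binding equator at \(\ttt{a}^{\circ}\), which is exactly the paper's one-line justification. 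Both require \((\ttt{a}^{\circ},f_1)\) to lie in \(P\) already.

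So two same-page arcs meeting at the omitted vertex do not violate alternation, and nothing collapses back to undo the avoidance. That fusion is precisely the normalization described in the remark after \Definition{bridge-loop}. The correct rule, as in the paper, is: reverse \(\ttt{A}'\) if and only if reversal makes its initial page \emph{agree} with the page of the arc of \ttt{C} arriving at \(\ttt{a}^{\circ}\). Your own first paragraph already forces this. If the fragment's first internal arc \((f_1,f_2)\) is in \(\neg P\), then \((\ttt{a}^{\circ},f_1)\) is in \(P\), so your two paragraphs contradict each other.

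The trefoil you cite as a sanity check actually refutes your rule. Bridge \ttt{+520} arrives at \ttt{2} along \((\ottt{5},\ttt{2},N)\), and \ttt{+2014032} begins with \((\ttt{2},\ttt{0},N)\). The pages agree, and the paper uses the unreversed fragment \ttt{01403}, obtaining \ttt{+5014030}. Your rule would choose \ttt{30410} and produce \ttt{+5304100}. Its arcs \((\ttt{0},\ttt{4},N)\), \((\ttt{4},\ttt{1},S)\), \((\ttt{1},\ttt{0},N)\), \((\ttt{3},\ttt{0},S)\) are arcs of the loop around bridge \ttt{3} with every page swapped, which is not that loop at all.

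The lemma's coarse wording (``depends on page agreement'') would be satisfied by either rule. However, your argument for the dependence rests on a false claim about the splice, and the rule it yields is the wrong one. Replacing your second step with the fusion argument above repairs the proof.
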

\begin{proof}
  Because we do not want to cross the equatorial binding, we reverse the looping word if and only
  if that makes the initial page of the looping word agree with the page of the arc just before
  the crossing arc. See \Fig{avoid}.
\end{proof}

\section{Reducing Knot \(6_3\) to a \(2\)-Bridge Presentation}
Knot \(6_3\) with Dowker code \(4,\ 8,\ 10,\ 2,\ 12,\ 6\) has a \(3\)-page bridge embedding
characterized by the sentence \ttt{-bhc-dbe-fkg-hdi-jek+lja;efkljidchgba}. We pick this
knot because it is small enough to trace the moves by hand, rational, and does not have
a \(2\)-block rational decomposition, which makes it interesting for the effect on the
edge multiplicities at the end. See \Fig{K6_3} for a standard knot diagram that is
minimal in number of crossings, has \(6\) bridges, and is in a \(3\)-page bridge
embedding. Underpasses are omitted in the figure. From the circle word, these underpasses
are \(\ottt{ef}, \ldots, \ottt{ba}\).

\nextLabel{fig:K6_3}
\begin{figure}
  \centering
  \begin{tikzpicture}[x=0.1cm, y=0.07cm, every node/.append style={fill=diagrambg, inner sep=2pt}]
    \coordinate (e) at (10, 0);
    \coordinate (f) at (20, 0);
    \coordinate (k) at (30, 0);
    \coordinate (l) at (40, 0);
    \coordinate (j) at (40, 10);
    \coordinate (i) at (40, 20);
    \coordinate (d) at (30, 30);
    \coordinate (c) at (20, 30);
    \coordinate (h) at (10, 30);
    \coordinate (g) at (5, 20);
    \coordinate (b) at (5, 10);
    \coordinate (a) at (5, 0);
    \draw (b) -- ($(b) + (-5, 0)$) -- ($(g) + (-5, 5)$) -- ($(c) + (0, -5)$) -- (c);
    \draw (d) -- ($(d) + (0, 10)$) -- (-5, 40) -- ($(b) + (-10, -5)$) -- (10, 5) -- (e);
    \draw (f) -- ($(f) + (0, -5)$) -- ($(k) + (3, -5)$) -- (33, 20) -- (g);
    \draw (h) -- ($(h) + (0, 5)$) -- ($(c) + (5, 5)$) -- ($(c) + (5, -5)$) -- ($(i) + (0, 5)$) -- (i);
    \draw (j) -- ($(j) + (5, 0)$) -- (45, -10) -- ($(e) + (5, -10)$) -- ($(e) + (5, 5)$) -- ($(k) + (0, 5)$) -- (k);
    \draw (l) -- ($(l) + (-3, 0)$) -- ($(j) + (-3, 5)$) -- ($(j) + (10, 5)$) -- (50, -15) -- (5, -15) -- (a);
    \node at (a) {\ttt{a}};
    \node at (b) {\ttt{b}};
    \node at (c) {\ttt{c}};
    \node at (d) {\ttt{d}};
    \node at (e) {\ttt{e}};
    \node at (f) {\ttt{f}};
    \node at (g) {\ttt{g}};
    \node at (h) {\ttt{h}};
    \node at (i) {\ttt{i}};
    \node at (j) {\ttt{j}};
    \node at (k) {\ttt{k}};
    \node at (l) {\ttt{l}};
  \end{tikzpicture}
  \caption{Knot \(6_3\) with sentence \ttt{-bhc-dbe-fkg-hdi-jek+lja} \ttt{;efkljidchgba}
    encoding a \(3\)-page bridge presentation.}
  \label{\pendingLabel}
\end{figure}

We perform reduction by the grammar rules. This is shown in \Table{K6_3}. Some normalizations
have been omitted for brevity. In the table, we write the underpass avoidances in four
forms with \(\ttt{X}'_{P}\) giving the word fragment to use when traversing bridge
\ttt{X} to avoid vertex \(\ttt{x}^{\circ}\) in an approach through page \(P\).
Because each bridge has two names and \(P\) can be \(N\) or \(S\), this gives us \(4\)
distinct fragments for each bridge, according to the end being avoided and the page
with the first arc in the avoidance.
These all arise from the same loop around the same bridge. Because using an underpass
avoidance move is followed by closing the underpass, we never use both routes for underpass
avoidance, but we list two versions in this table for clarity. Similarly, we never need
both names for a bridge, but we include both in the table because it makes the selections
clearer during underpass avoidance.

\Table{K6_3} contains enough information to reconstruct the original sentence (hence,
the diagram). Because the underpass avoidance word \(\ttt{X}'_S\) is just the word
\(\ttt{X}'_N\) in reversed order, we do not write both versions in \Table{K6_3reduce}.

\nextLabel{tab:K6_3}
\begin{table}
  \caption{Potential initial grammar rules of knot \(6_3\)}\label{\pendingLabel}
  \begin{tabular}{rcl|rcl|rcl|rcl}
  \(\ttt{B},\ttt{C}\) &\(=\) &\(\ttt{-}\ottt{b}\ttt{h}\ottt{c}\)
    & \(\ttt{D}, \ttt{E}\) &\(=\) &\(\ttt{-}\ottt{d}\ttt{b}\ottt{e}\)
    & \(\ttt{F}, \ttt{G}\) &\(=\) &\(\ttt{-}\ottt{f}\ttt{k}\ottt{g}\)
    & \(\ttt{H}, \ttt{I}\) &\(=\) &\(\ttt{-}\ottt{h}\ttt{d}\ottt{i}\)\\
  \(\ttt{J}, \ttt{K}\) &\(=\) &\(\ttt{-}\ottt{j}\ttt{e}\ottt{k}\)
    & \(\ttt{L}, \ttt{A}\) &\(=\) &\(\ttt{+}\ottt{l}\ttt{j}\ottt{a}\)
    & \ttt{b} & \(=\) & \(\ttt{a}^{\circ},\ttt{b}^{\circ}\)
    & \ttt{d} & \(=\) & \(\ttt{c}^{\circ},\ttt{d}^{\circ}\)\\
  \ttt{e} & \(=\) & \(\ttt{e}^{\circ},\ttt{f}^{\circ}\)
    & \ttt{h} & \(=\) & \(\ttt{g}^{\circ},\ttt{h}^{\circ}\)
    & \ttt{j} & \(=\) & \(\ttt{i}^{\circ},\ttt{j}^{\circ}\)
    & \ttt{k} & \(=\) & \(\ttt{k}^{\circ},\ttt{l}^{\circ}\)\\
  \(\ttt{A}'_N\) &\(=\) & \(\ttt{ajlkj}\)
    & \(\ttt{A}'_S\) &\(=\) & \(\ttt{jklja}\)
    & \(\ttt{B}'_N\) &\(=\) & \(\ttt{ghdch}\)
    & \(\ttt{B}'_S\) &\(=\) & \(\ttt{hcdhg}\)\\
  \(\ttt{C}'_N\) &\(=\) & \(\ttt{hgbhc}\)
    & \(\ttt{C}'_S\) &\(=\) & \(\ttt{chbgh}\)
    & \(\ttt{D}'_N\) &\(=\) & \(\ttt{ibaeb}\)
    & \(\ttt{D}'_S\) &\(=\) & \(\ttt{beabi}\)\\
  \(\ttt{E}'_N\) &\(=\) & \(\ttt{bdiba}\)
    & \(\ttt{E}'_S\) &\(=\) & \(\ttt{abidb}\)
    & \(\ttt{F}'_N\) &\(=\) & \(\ttt{fkghk}\)
    & \(\ttt{F}'_S\) &\(=\) & \(\ttt{khgkf}\)\\
  \(\ttt{G}'_N\) &\(=\) & \(\ttt{kefkg}\)
    & \(\ttt{G}'_S\) &\(=\) & \(\ttt{gkfek}\)
    & \(\ttt{H}'_N\) &\(=\) & \(\ttt{cdjid}\)
    & \(\ttt{H}'_S\) &\(=\) & \(\ttt{dijdc}\)\\
  \(\ttt{I}'_N\) &\(=\) & \(\ttt{dchdi}\)
    & \(\ttt{I}'_S\) &\(=\) & \(\ttt{idhcd}\)
    & \(\ttt{J}'_N\) &\(=\) & \(\ttt{lefke}\)
    & \(\ttt{J}'_S\) &\(=\) & \(\ttt{ekfel}\)\\
  \(\ttt{K}'_N\) &\(=\) & \(\ttt{ejlef}\)
    & \(\ttt{K}'_S\) &\(=\) & \(\ttt{felje}\)
    & \(\ttt{L}'_N\) &\(=\) & \(\ttt{jbajl}\)
    & \(\ttt{L}'_S\) &\(=\) & \(\ttt{ljabj}\)\\
  \end{tabular}
\end{table}

\Table{K6_3reduce} ends with two bridges, \ttt{D} and \ttt{H}. 
Because \(\ttt{d} = \ttt{d}^{\circ} \in \ttt{D}\) and \(\ttt{h} = \ttt{h}^{\circ} \in \ttt{H}\),
the bridge presentation is locally minimal. See \Fig{K6_3circle} for a chord diagram of the
bridge presentation represented by the sentence \ttt{-hdghdchgdhcdhgdchdghc}
\ttt{-dhcdhgdchdghdchgdhcdg;dchg}. Starting at \Fig{K6_3} and ending at \Fig{K6_3circle}, we
show the evolution of the diagram as different underpasses are closed and the number of
bridges goes down.

\nextLabel{tab:K6_3reduce}
\begin{table}
  \caption{Reducing the number of bridges in knot \(6_3\) to \(2\).}\label{\pendingLabel}
  \begin{tabular}{rcl|rcl}
    \multicolumn{3}{c}{\textbf{Rules}} & \multicolumn{3}{c}{\textbf{Changes}}\\
    \hline
    \ttt{b} & \(\mapsto\) & \(\ttt{B}'\)
      & \ttt{D} & \(=\) & \(\ttt{-}\ottt{d}\ttt{B}_S'\ottt{e}\)\\
    & & & & \(=\) & \ttt{-}\ottt{d}\ttt{hcdhg}\ottt{e}\\
    & & & & \(=\) & \ttt{-}\ottt{d}\ttt{hcdh}\ottt{e}\\
    \ttt{L} & \(\mapsto\) & \ttt{L\#C}
      & \ttt{L} & \(=\) & \ttt{+}\ottt{l}\ttt{jh}\ottt{c}\\
    \ttt{B} & \(\mapsto\) & \(\emptyset\) & & &\\
    \ottt{a},\ottt{b} & \(\mapsto\) & \(\emptyset\) & & &\\
    \ttt{a},\ttt{b} & \(\mapsto\) & \ttt{g} & \multicolumn{3}{c}{(See \Fig{K6_3_2})}\\
    \ttt{e} & \(\mapsto\) & \(\ttt{F}'\) & \ttt{J} & \(=\) & \ttt{-}\ottt{j}\(\ttt{F}'_S\)\ottt{k}\\
    & & & & \(=\) & \ttt{-}\ottt{j}\ttt{khghkf}\ottt{k}\\
    & & & & \(=\) & \ttt{-}\ottt{j}\ttt{khgk}\ottt{k}\\
    & & & & \(=\) & \ttt{-}\ottt{j}\ttt{khg}\ottt{k}\\
    \ttt{D} & \(\mapsto\) & \ttt{D\#G} & \ttt{D} & \(=\) & \ttt{-}\ottt{d}\ttt{hcdhk}\ottt{g}\\
    \ttt{F} & \(\mapsto\) & \(\emptyset\) & & &\\
    \ottt{e},\ottt{f} & \(\mapsto\) & \(\emptyset\) & & &\\
    \ttt{e},\ttt{f} & \(\mapsto\) & \ttt{g} & \multicolumn{3}{c}{(See \Fig{K6_3_3})}\\
    \ttt{j} & \(\mapsto\) & \(\ttt{I}'\) & \ttt{L} & \(=\) & \ttt{+}\ottt{l}\(\ttt{I}'_N\)\ttt{h}\ottt{c}\\
    & & & & \(=\) & \ttt{+}\ottt{l}\ttt{dchdlh}\ottt{c}\\
    \ttt{H} & \(\mapsto\) & \ttt{H\#K} & \ttt{H} & \(=\) & \ttt{-}\ottt{h}\ttt{dlkh}\ottt{k}\\
    \ttt{I} & \(\mapsto\) & \(\emptyset\) & & &\\
    \ottt{i},\ottt{j} & \(\mapsto\) & \(\emptyset\) & & &\\
    \ttt{i},\ttt{j} & \(\mapsto\) & \ttt{l} & \multicolumn{3}{c}{(See \Fig{K6_3_4})}\\
    \ttt{k} & \(\mapsto\) & \(\ttt{L}'\) & \ttt{D} & \(=\) & \ttt{-}\ottt{d}\(\ttt{hcdhL}'_S\)\ottt{g}\\
    & & & & \(=\) & \ttt{-}\ottt{d}\ttt{hcdhldchdlhdchldhcd}\ottt{g}\\
    & & & \ttt{H} & \(=\) & \ttt{-}\ottt{h}\(\ttt{dlL}'_S\)\ttt{h}\ottt{k}\\
    & & & & \(=\) & \ttt{-}\ottt{h}\ttt{dlldchdlhdchldhcdh}\ottt{k}\\
    & & & & \(=\) & \ttt{-}\ottt{h}\ttt{ddchdlhdchldhcdh}\ottt{k}\\
    & & & & \(=\) & \ttt{-}\ottt{h}\ttt{chdlhdchldhcdh}\ottt{k}\\
    & & & & \(=\) & \ttt{-}\ottt{h}\ttt{dlhdchldhcdh}\ottt{k}\\
    \ttt{H} & \(\mapsto\) & \ttt{H\#C} & \ttt{H} & \(=\) & \ttt{-}\ottt{h}\ttt{dlhdchldhcdhkdchdlh}\ottt{c}\\
    \ttt{L} & \(\mapsto\) & \(\emptyset\) & & &\\
    \ottt{k},\ottt{l} & \(\mapsto\) & \(\emptyset\) & & &\\
    \ttt{k},\ttt{l} & \(\mapsto\) & \ttt{g} & & &\\
  \end{tabular}
\end{table}

\nextLabel{fig:K6_3_2}
\begin{figure}
  \centering
  \begin{tikzpicture}[x=0.1cm, y=0.07cm, every node/.append style={fill=diagrambg, inner sep=2pt}]
    \coordinate (e) at (10, 0);
    \coordinate (f) at (20, 0);
    \coordinate (k) at (30, 0);
    \coordinate (l) at (40, 0);
    \coordinate (j) at (40, 10);
    \coordinate (i) at (40, 20);
    \coordinate (d) at (30, 25);
    \coordinate (c) at (20, 25);
    \coordinate (h) at (10, 25);
    \coordinate (g) at (10, 10);
    \coordinate (b) at (5, 10);
    \coordinate (a) at (5, 0);
    \draw (d) -- ($(d) + (0, 10)$) -- ($(h) + (-5, 10)$) -- ($(h) + (-5, -3)$) -- ($(h) + (5, -3)$) -- ($(c) + (-5, 2.5)$) -- ($(c) + (2.5, 2.5)$) -- ($(c) + (2.5, -10)$) -- ($(g) + (-5, 5)$) -- ($(e) + (-5, 0)$) -- (e);
    \draw (f) -- ($(f) + (0, -5)$) -- ($(k) + (3, -5)$) -- (33, 10) -- (g);
    \draw (h) -- ($(h) + (0, 5)$) -- ($(c) + (5, 5)$) -- ($(c) + (5, -5)$) -- (i);
    \draw (j) -- ($(j) + (5, 0)$) -- (45, -10) -- ($(e) + (5, -10)$) -- ($(e) + (5, 5)$) -- ($(k) + (0, 5)$) -- (k);
    \draw (l) -- ($(l) + (-3, 0)$) -- ($(j) + (-3, 5)$) -- ($(j) + (10, 5)$) -- (50, -15) -- (0, -15) -- ($(g) + (-10, 8)$) -- ($(c) + (0, -7)$) -- (c);
    \node at (c) {\ttt{c}};
    \node at (d) {\ttt{d}};
    \node at (e) {\ttt{e}};
    \node at (f) {\ttt{f}};
    \node at (g) {\ttt{g}};
    \node at (h) {\ttt{h}};
    \node at (i) {\ttt{i}};
    \node at (j) {\ttt{j}};
    \node at (k) {\ttt{k}};
    \node at (l) {\ttt{l}};
  \end{tikzpicture}
  \caption{Knot \(6_3\) with sentence \ttt{-dhcdhe-fkg-hdi-jek+ljhc} \ttt{;efkljidchg}
    encoding a \(3\)-page bridge presentation.}
  \label{\pendingLabel}
\end{figure}

\nextLabel{fig:K6_3_3}
\begin{figure}
  \centering
  \begin{tikzpicture}[x=0.1cm, y=0.07cm, every node/.append style={fill=diagrambg, inner sep=2pt}]
    \coordinate (k) at (25, 0);
    \coordinate (l) at (35, 0);
    \coordinate (j) at (35, 10);
    \coordinate (i) at (35, 20);
    \coordinate (d) at (30, 25);
    \coordinate (c) at (20, 25);
    \coordinate (h) at (10, 25);
    \coordinate (g) at (10, 5);
    \draw (d) -- ($(d) + (0, 10)$) -- ($(h) + (-5, 10)$) -- ($(h) + (-5, -3)$) -- ($(h) + (5, -3)$) -- ($(c) + (-5, 2.5)$) -- ($(c) + (2.5, 2.5)$) -- ($(c) + (2.5, -10)$) -- ($(g) + (-8, 10)$) -- ($(g) + (-8, -10)$) -- ($(k) + (3, -5)$) -- (28, 5) -- (g);
    \draw (h) -- ($(h) + (0, 5)$) -- ($(c) + (5, 5)$) -- ($(c) + (5, -5)$) -- (i);
    \draw (j) -- ($(j) + (5, 0)$) -- (40, -5) -- ($(l) + (-5, -5)$) -- ($(j) + (-5, 0)$) -- ($(g) + (-5, 5)$) -- ($(g) + (-5, -5)$) -- (k);
    \draw (l) -- ($(l) + (-3, 0)$) -- ($(j) + (-3, 5)$) -- ($(j) + (10, 5)$) -- (45, -10) -- (0, -10) -- ($(g) + (-10, 13)$) -- ($(c) + (0, -7)$) -- (c);
    \node at (c) {\ttt{c}};
    \node at (d) {\ttt{d}};
    \node at (g) {\ttt{g}};
    \node at (h) {\ttt{h}};
    \node at (i) {\ttt{i}};
    \node at (j) {\ttt{j}};
    \node at (k) {\ttt{k}};
    \node at (l) {\ttt{l}};
  \end{tikzpicture}
  \caption{Knot \(6_3\) with sentence \ttt{-dhcdhkg-hdi-jkhk+ljhc} \ttt{;kljidchg}
    encoding a \(3\)-page bridge presentation.}
  \label{\pendingLabel}
\end{figure}

\nextLabel{fig:K6_3_4}
\begin{figure}
  \centering
  \begin{tikzpicture}[x=0.1cm, y=0.07cm, every node/.append style={fill=diagrambg, inner sep=2pt}]
    \coordinate (k) at (25, 0);
    \coordinate (l) at (35, 0);
    \coordinate (j) at (35, 10);
    \coordinate (i) at (35, 20);
    \coordinate (d) at (35, 30);
    \coordinate (c) at (20, 25);
    \coordinate (h) at (10, 30);
    \coordinate (g) at (10, 5);
    \draw (d) -- ($(d) + (0, 10)$) -- ($(h) + (-5, 10)$) -- ($(h) + (-5, -8)$) -- ($(h) + (7, -8)$) -- ($(c) + (-3, 3)$) -- ($(c) + (2.5, 3)$) -- ($(c) + (2.5, -10)$) -- ($(g) + (-8, 10)$) -- ($(g) + (-8, -10)$) -- ($(k) + (3, -5)$) -- (28, 5) -- (g);
    \draw (h) -- ($(h) + (0, 5)$) -- ($(c) + (8, 10)$) -- ($(c) + (8, -5)$) -- ($(i) + (5, 0)$) -- (40, -5) -- ($(l) + (-5, -5)$) -- ($(j) + (-5, 0)$) -- ($(g) + (-5, 5)$) -- ($(g) + (-5, -5)$) -- (k);
    \draw (l) -- ($(l) + (-3, 0)$) -- ($(j) + (-3, 5)$) -- ($(c) + (5, -10)$) -- ($(c) + (5, 5)$) -- ($(h) + (3, 0)$) -- ($(h) + (3, -3)$) -- ($(h) + (-3, -3)$) -- ($(h) + (-3, 7)$) -- ($(d) + (-3, 7)$) -- ($(d) + (-3, -5)$) -- ($(d) + (3, -5)$) -- ($(d) + (3, 15)$) -- ($(h) + (-8, 15)$) -- ($(h) + (-8, -12)$) -- ($(c) + (0, -7)$) -- (c);
    \node at (c) {\ttt{c}};
    \node at (d) {\ttt{d}};
    \node at (g) {\ttt{g}};
    \node at (h) {\ttt{h}};
    \node at (k) {\ttt{k}};
    \node at (l) {\ttt{l}};
  \end{tikzpicture}
  \caption{Knot \(6_3\) with sentence \ttt{-dhcdhkg-hdlkhk-ldchdlhc} \ttt{;kldchg}
    encoding a \(3\)-page bridge presentation.}
  \label{\pendingLabel}
\end{figure}

\nextLabel{fig:K6_3circle}
\begin{figure}
  \centering
  \begin{subfigure}[t]{0.31\textwidth}
    \centering
    \begin{tikzpicture}[every node/.append style={fill=diagrambg, inner sep=2pt}]
      \coordinate (v0) at (0:1);
      \coordinate (v0e) at (45:1);
      \coordinate (v1) at (90:1);
      \coordinate (v1e) at (135:1);
      \coordinate (v4) at (180:1);
      \coordinate (v4e) at (225:1);
      \coordinate (v5) at (270:1);
      \coordinate (v5e) at (315:1);
      \draw[local binding] (0, 0) circle[radius=1];
      \draw (v4) -- (v0e);
      \draw (v0) -- (v4e);
      \draw (v5e) -- node[sloped] {\(7\)} (v4e);
      \draw (v1e) -- node[sloped] {\(7\)} (v0e);
      \draw (v0e) -- node[sloped] {\(4\)} (v4e);
      \node at (v0) {\ttt{d}};
      \node at (v0e) {\(\bullet\)};
      \node at (v1) {\ttt{c}};
      \node at (v1e) {\(\circ\)};
      \node at (v4) {\ttt{h}};
      \node at (v4e) {\(\bullet\)};
      \node at (v5) {\ttt{g}};
      \node at (v5e) {\(\circ\)};
    \end{tikzpicture}
    \caption{Page \(S\)}
  \end{subfigure}
  \begin{subfigure}[t]{0.31\textwidth}
    \centering
    \begin{tikzpicture}[every node/.append style={fill=diagrambg, inner sep=2pt}]
      \coordinate (v0) at (0:1);
      \coordinate (v0e) at (45:1);
      \coordinate (v1) at (90:1);
      \coordinate (v1e) at (135:1);
      \coordinate (v4) at (180:1);
      \coordinate (v4e) at (225:1);
      \coordinate (v5) at (270:1);
      \coordinate (v5e) at (315:1);
      \draw[local binding] (0, 0) circle[radius=1];
      \draw (v1e) -- node[sloped] {\(7\)} (v4e);
      \draw (v0e) -- node[sloped] {\(7\)} (v5e);
      \draw (v0e) -- node[sloped] {\(4\)} (v4e);
      \draw (v4e) -- (v1);
      \draw (v0e) -- (v5);
      \node at (v0) {\ttt{d}};
      \node at (v0e) {\(\bullet\)};
      \node at (v1) {\ttt{c}};
      \node at (v1e) {\(\circ\)};
      \node at (v4) {\ttt{h}};
      \node at (v4e) {\(\bullet\)};
      \node at (v5) {\ttt{g}};
      \node at (v5e) {\(\circ\)};
    \end{tikzpicture}
    \caption{Page \(N\)}
  \end{subfigure}
  \hfill
  \begin{subfigure}[t]{0.31\textwidth}
    \centering
    \begin{tikzpicture}[every node/.append style={fill=diagrambg, inner sep=2pt}]
      \def\aZero{0} \def\aOne{90} \def\aFour{180} \def\aFive{270}
      \def\rN{1} \def\rOut{1.25} \def\rOutA{1.5} \def\rOutC{2}
      \coordinate (v0) at (\aZero:1);
      \coordinate (v1) at (\aOne:1);
      \coordinate (v4) at (\aFour:1);
      \coordinate (v5) at (\aFive:1);
      \draw[local binding] (0, 0) circle[radius=1];
      \draw[local strand] (v0) arc[start angle=\aZero, end angle=\aOne, radius=1];
      \draw[local strand] (v4) arc[start angle=\aFour, end angle=\aFive, radius=1];
      \node at (v0) {\ttt{d}};
      \node at (v1) {\ttt{c}};
      \node at (v4) {\ttt{h}};
      \node at (v5) {\ttt{g}};
    \end{tikzpicture}
    \caption{Page \(U\)}
  \end{subfigure}
  \caption{Pages of the \(3\)-page embedding of the \(2\)-bridge presentation of
    knot \(6_3\) with sentence \ttt{-hdghdchgdhcdhgdchdghc}
    \ttt{-dhcdhgdchdghdchgdhcdg;dchg}.}
  \label{\pendingLabel}
\end{figure}

\bibliographystyle{amsplain}
\bibliography{references}

\end{document}